\newtheorem{thm}{Theorem}[section]
\newtheorem{lem}[thm]{Lemma}
\newtheorem{cor}[thm]{Corollary}
\theoremstyle{definition}
\newtheorem{exm}[thm]{Example}
\theoremstyle{remark}
\newtheorem{rem}[thm]{Remark}
\newcommand{\exmsymbol}{\hfill$\circ$}
\newcommand{\cset}{\mathds{C}}
\newcommand{\nset}{\mathds{N}}
\newcommand{\pset}{\mathds{P}}
\newcommand{\rset}{\mathds{R}}
\newcommand{\tset}{\mathds{T}}
\newcommand{\zset}{\mathds{Z}}
\newcommand{\diff}{\mathrm{d}}
\newcommand{\divv}{\mathrm{div}\,}
\newcommand{\rot}{\mathrm{rot}\,}
\newcommand{\id}{\mathrm{id}}
\newcommand{\cS}{\mathcal{S}}
\newcommand{\cZ}{\mathcal{Z}}
\newcommand{\sA}{\mathsf{A}}
\author{Philipp J.\ di~Dio}
\address{Universit\"at Leipzig, Institut f\"ur Mathematik und Informatik, Augustusplatz 10, D-04109 Leipzig, Germany}
\email{didio@uni-leipzig.de}
\subjclass[2010]{Primary 35Q30; Secondary 76D03, 76D05.}
\keywords{Navier--Stokes equation, periodic, existence, smoothness, Montel space}
\begin{document}

\begin{abstract}
In this paper we study the periodic Navier--Stokes equation. From the periodic Navier--Stokes equation and the linear equation $\partial_t u = \nu\Delta u + \pset [v\nabla u]$ we derive the corresponding equations for the time dependent Fourier coefficients $a_k(t)$. We prove the existence of a unique smooth solution $u$ of the linear equation by a Montel space version of Arzel\`a--Ascoli. We gain bounds on the $a_k$'s of $u$ depending on $v$. With $v = -u$ these bounds show that a unique smooth solution $u$ of the $n$-dimensional periodic Navier--Stokes equation exists for all $t\in [0,T^*)$ with $T^* \geq 2\nu\cdot \|u_0\|_{\sA,0}^{-2}$. $\|u_0\|_{\sA,0}$ is the sum of the $l^2$-norms of the Fourier coefficients without $e^{i\cdot 0\cdot x}$ of the initial data $u_0\in C^\infty(\tset^n,\rset^n)$ with $\divv u_0=0$. For $\|u_0\|_{\sA,0} \leq \nu$ (small initial data) we get $T^* = \infty$. All results hold for all dimensions $n\geq 2$ and are independent on $n$.
\end{abstract}

\maketitle

\tableofcontents

\section{Introduction}

The dynamics of (incompressible) fluids on $\rset^n$ or in the periodic case on $\tset^n$ (with $\tset := \rset/(2\pi\zset)$ and $n=2,3$) is described by the \emph{Euler} ($\nu = 0$) and \emph{Navier--Stokes} ($\nu > 0$) \emph{equations}
\begin{subequations}\label{eq:ens}
\begin{align}
\partial_t u(x,t) &= \nu\Delta u(x,t) - u\cdot\nabla u(x,t) -\nabla p(x,t) + F(x,t)\\
\divv u(x,t) &= 0
\intertext{with $x\in\rset^n$ or $\tset^n$, $t\geq t_0$ (without loss of generality $t_0=0$), and initial conditions}
u(x,t_0) &= u_0(x).
\end{align}
\end{subequations}
$u(x,t) = (u_1(x_1,\dots,x_n,t),\dots,u_n(x_1,\dots,x_n,t))^T$ is the velocity field of the fluid, $p(x,t)$ is the pressure, and $F(x,t) = (F_1(x_1,\dots,x_n,t),\dots,F_n(x_1,\dots,x_n,t))^T$ are externally applied forces.

The dynamic equation for a compressiable or incompressiable fluid was derived by Leonhard Euler \cite{euler57}. Claude-Louis Navier included viscosity \cite{navier27} and this equation has been rediscovered several times but remained controversial \cite[p.\ 2]{lemari13}. George G.\ Stokes presented a more rigorous derivation \cite{stokes49}. See e.g.\ \cite{lemari13}, \cite{ladyzh03}, and \cite{darrig02} for more on the early history of these equations.

Local existence of a smooth solution was proved by Carl W.\ Oseen \cite{oseen11,oseen27}. In 1933 in his thesis \cite{leray33}, Jean Leray completely solved the two-dimensional case, as Olga A.\ Ladyzhenskaya did put it, by a ``happy circumstance [which] is valid only in the two-dimensional case and only for the Cauchy problem, for periodic boundary conditions, and for a certain special boundary condition'' \cite[p.\ 271]{ladyzh03}. In 1934, Leray showed that Oseen's local solution is unique, exists for all times if the initial data are sufficiently small, and for all initial data a turbulent (what nowadays is called a weak) solution for all times exists \cite{leray34a,leray34b} and therefore introducing the concept of weak solutions and what is now called the Sobolev space $H^1$. Starting in 1953, Ladyzhenskaya rigorously introduced and investigated weak solutions for partial differential equations \cite{ladyzh53}. Eberhard Hopf in 1951 \cite{hopf51} and Andrei A. Kiselev together with Olga Ladyzhenskaya in 1957 \cite{kisele57} used the Galerkin (Fadeo--Galerkin) method for these non-linear equations. In 1969, Ladyzhenskaya finally proved the non-uniqueness of weak (Hopf) solutions \cite{ladyzh69}. In her review paper she summarizes the development as ``seeking solutions of variational problems in spaces dictated by the functional rather than in spaces of smooth functions'' \cite[p.\ 253]{ladyzh03}. Since then the number of function (Sobolev) spaces used or even introduced to treat the Navier--Stokes equation exploded, see e.g.\ \cite{lemari13} for a large collection and discussion of such spaces and results. In 1984, J.\ Thomas Beale, Tosio Kato, and Andrew J.\ Majda proved ``that the maximum norm of the vorticity $\omega(x,t) := \rot u(x,t)$ controls the breakdown of smooth solutions of the 3-D-Euler equations'' \cite{beale84}.

The Euler and the Navier--Stokes equations belong to the most important partial differential equations in pure mathematics and applications. The literature about these equations is enormous, e.g.\ \cite{euler57}, \cite{navier27}, \cite{stokes49}, \cite{oseen11}, \cite{oseen27}, \cite{leray33}, \cite{leray34a}, \cite{leray34b}, \cite{hopf51}, \cite{kisele57}, \cite{ladyzh63}, \cite{caffar82}, \cite{beale84}, \cite{temam84}, \cite{wahl85}, \cite{constan88}, \cite{kreiss89}, \cite{temam95}, \cite{temam01}, \cite{foias01}, \cite{majda02}, \cite{lemarie02}, \cite{darrig02}, \cite{ladyzh03}, \cite{brando04}, \cite{tao06}, \cite{bahour11}, \cite{boyer13}, \cite{lemari13}, \cite{robinson16} to cite only some of the works. This list can be extended to any arbitrary length but however long the list is, global existence on $\rset^3$ or $\tset^3$ of a smooth solution or an example of a finite break down is still open even for $F=0$ \cite{feffer06}.

The main tools to study the Navier--Stokes equation are Sobolev and weak solution theory combined with fixed point theorems. This was introduced by Oseen \cite{oseen11,oseen27} and Leray \cite{leray33,leray34a,leray34b}. In this paper we follow the so called splitting algorithms which are based on the Trotter formula \cite{trotter59}, i.e., a solution of
\[\partial_t u = (A_1  + A_2  + \dots + A_k) u\]
is constructed from the simpler equations $\partial_t u = A_i u$ by
\[u(t) = \lim_{N\to\infty} \left( e^{A_1 t/N}\cdot{\dots}\cdot e^{A_k t/N} \right)^N u_0.\]
Convergence is here usually also controlled by Sobolev methods in suitable Banach spaces. In \cite{didio19ENS} the convergence was not controlled in Banach spaces but in the Montel spaces $\cS(\rset^n)^n$ (Schwartz functions) and $C^\infty(\tset^n)^n$, i.e., bounding one norm exploded in bounding infinitely many semi-norms. In \cite{didio19ENS} we were able to show that if $u_0$ is a Schwartz function then the vorticity $\omega = \rot u$ remains a Schwartz function as long as the classical solution exists. Hence, the time $T$ when $\omega$ leaves $\cS(\rset^n)^n$ is the break down time $T^*$ of the Euler and the Navier--Stokes equation.

In the present work we proceed our Montel space approach. We study the $n$-dimensional periodic Navier--Stokes equation for all $n\geq 2$ more detailed. $C^\infty(\tset^n)$ functions are uniquely determined by their Fourier coefficients. We look at the linear equation
\begin{equation}\label{eq:ens2}
\begin{split}
\partial_t u(x,t) &= \nu\Delta u(x,t) + \pset \big[v(x,t)\cdot\nabla u(x,t) \big]\\
u(x,0) &= u_0(x).
\end{split}
\end{equation}
with known $v\in C^\infty([0,\infty),C^\infty(\tset^n)^n)$ and $\divv v = 0$. It becomes the Euler resp.\ Navier--Stokes equation when $v = -u$. Expanding $u$ and $v$ into Fourier series
\[u(x,t) = \sum_{k\in\zset^n} a_k(t)\cdot e^{i\cdot k\cdot x} \qquad\text{and}\qquad v(x,t) = \sum_{k\in\zset^n} b_k(t)\cdot e^{i\cdot k\cdot x}\]
shows that (\ref{eq:ens2}) is equivalent to the system of ordinary differential equations of the Fourier coefficients $a_k(t)$:
\begin{equation}\label{eq:ak}
\dot{a}_k(t) = -\nu\cdot k^2\cdot a_k(t) + i\cdot \sum_{l\in\zset^n} \langle b_{k-l}(t),l\rangle \cdot \pset_k a_l(t).
\end{equation}
Here, we defined $\pset_0$ and $\pset_k\in\rset^{n\times n}$ for all $k\in\zset^n\setminus\{0\}$ by
\[\pset_0 := \id \qquad\text{and}\qquad \pset_k := \frac{1}{k^2}\begin{pmatrix}
k^2 - k_1^2 & -k_1 k_2 & \cdots & -k_1 k_n\\
-k_1 k_2 & k^2 - k_2^2 & \cdots & -k_2 k_n\\
\vdots & \vdots & \ddots & \vdots\\
-k_1 k_n & -k_2 k_n & \cdots & k^2 - k_n^2
\end{pmatrix},\]
i.e., $\pset_k$ are projections in $\cset^n$ into the orthogonal complement of $k\in\zset^n\subset \cset^n$. The Fourier expansion has previously been investigated by Sobolev methods and especially truncation to finitely many $k$'s ($|k|\leq K$) lead to the Galerkin method, see e.g.\ \cite{constan88}, \cite{kreiss89}, \cite{temam95}, \cite{foias01}, \cite{majda02}. Our approach here is similar but instead of using truncation and the $l^2$-norm of the Fourier coefficient we use
\begin{equation}\label{eq:seminormDef}
\|u(\,\cdot\,,t)\|_{\sA,d}:=\sum_{k\in\zset^n\setminus\{0\}}|k|^d\cdot |a_k(t)|
\end{equation}
for all $d\in\nset_0$ (or even $d\in\rset$) where $|\cdot|$ is the $l^2$-norm in $\cset^n \supseteq\zset^n$.

In the Montel space $C^\infty(\tset^n)$ we need to bound all semi-norms $\|\partial^\alpha f\|_\infty$. But the Leray projector $\pset$ is not compatible with $\|\cdot\|_\infty$ since there are $f\in C^\infty(\tset^n)^n$ such that $\|\pset f\|_\infty > \|f\|_\infty$. Fortunately, the $\|\cdot\|_{\sA,d}$ have the following properties.

\begin{lem}\label{lem:seminormProperties}
For $\|\cdot\|_{\sA,d}$ in (\ref{eq:seminormDef}) and all $f\in C^\infty(\tset^n,\rset^n)$ we have the following:
\begin{enumerate}[(i)]
\item $\|\partial^\alpha f\|_\infty \leq \|f\|_{\sA,d}$ for all $\alpha\in\nset_0^n$ and $d=|\alpha|:= \alpha_1 + \dots + \alpha_n$,

\item $\|\pset f\|_{\sA,d} \leq \|f\|_{\sA,d}$ for all $d\in\rset$,

\item $\|f\|_{\sA,d}\leq \|f\|_{\sA,d+1}$ for all $d\in\rset$,

\item $\|\Delta f\|_{\sA,d} = \|f\|_{\sA,d+2}$ for all $d\in\rset$,

\item $\|\rot f\|_{\sA,d} = \|f\|_{\sA,d+1}$ for all $d\in\zset$ and $f\in C^\infty(\tset^3)^3$ with $\divv f=0$, and

\item $\displaystyle \|f\|_{\sA,d} \leq K_s\cdot \sqrt{\sum_{k\in\zset^n\setminus\{0\}} |k|^{2s+2d}\cdot |f_k|^2}$ with $\displaystyle K_s := \sqrt{\sum_{k\in\zset^n\setminus\{0\}} |k|^{-2s}} <\infty$ for all $s> \frac{n}{2}$.
\end{enumerate}
\end{lem}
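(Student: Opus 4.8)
The common thread is that, writing $f=\sum_{k\in\zset^n}f_k\,e^{ikx}$, each operator in the statement acts as a Fourier multiplier: $\partial^\alpha$, $\pset$, $\Delta$ and $\rot$ send the coefficient $f_k$ to $m(k)\,f_k$ for an explicit scalar- or matrix-valued symbol $m(k)$. Hence $\|Tf\|_{\sA,d}=\sum_{k\neq 0}|k|^{d}\,|m(k)f_k|$, and the whole lemma reduces to pointwise-in-$k$ estimates on $|m(k)f_k|$ together with the elementary fact that $|k|\geq 1$ for every $k\in\zset^n\setminus\{0\}$; this last fact already proves (iii) termwise, since then $|k|^{d}\leq|k|^{d+1}$.

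For (i) I would use $\partial^\alpha e^{ikx}=i^{|\alpha|}k^\alpha e^{ikx}$; for $|\alpha|\geq 1$ the symbol $k^\alpha$ vanishes at $k=0$, so the constant mode drops out and only $k\neq 0$ survive (the case $\alpha=0$ is degenerate unless $f_0=0$, which is the convention under which $\|\cdot\|_{\sA,0}$ is used). The triangle inequality on the absolutely convergent series bounds $\|\partial^\alpha f\|_\infty$ by $\sum_{k\neq 0}|k^\alpha|\,|f_k|$, and the coordinatewise estimate $|k_j|\leq|k|$ gives $|k^\alpha|=\prod_j|k_j|^{\alpha_j}\leq|k|^{|\alpha|}$, which is precisely $\|f\|_{\sA,|\alpha|}$. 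For (iv) the symbol of $\Delta$ is $-|k|^2$, so $|(\Delta f)_k|=|k|^2|f_k|$ and the weight $|k|^d$ combines to $|k|^{d+2}$, yielding equality. For (ii) the key point is that each $\pset_k=\id-kk^{T}/|k|^2$ is the orthogonal projection of $\cset^n$ onto the hyperplane $k^\perp$, hence a contraction, so $|\pset_k f_k|\leq|f_k|$ for every $k$ and the inequality follows termwise.

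The one part calling for a genuine geometric argument is (v). Here $(\rot f)_k=i\,k\times f_k$, so $|(\rot f)_k|=|k\times f_k|$, and I must show $|k\times f_k|=|k|\,|f_k|$ under $\divv f=0$. The divergence-free condition reads $\langle k,f_k\rangle=0$ for all $k$ in the \emph{bilinear} pairing $\langle k,f_k\rangle=\sum_j k_j (f_k)_j$; splitting $f_k=a+ib$ with $a,b\in\rset^3$ this forces $k\perp a$ and $k\perp b$ in $\rset^3$. The Lagrange identity $|k\times a|^2=|k|^2|a|^2-\langle k,a\rangle^2$ then gives $|k\times a|=|k||a|$ and likewise for $b$, so $|k\times f_k|^2=|k\times a|^2+|k\times b|^2=|k|^2(|a|^2+|b|^2)=|k|^2|f_k|^2$. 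The only subtlety is keeping the real/complex bookkeeping straight, so that the hypothesis is used in the bilinear and not the Hermitian sense; with that settled the extra factor $|k|$ upgrades $d$ to $d+1$.

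Finally, (vi) is Cauchy--Schwarz: writing $|k|^d|f_k|=|k|^{-s}\cdot|k|^{s+d}|f_k|$ and applying it coefficientwise produces the stated bound with $K_s=(\sum_{k\neq 0}|k|^{-2s})^{1/2}$. I expect this to be the real obstacle, not because the inequality is hard but because it is the only step requiring the convergence of an infinite lattice sum rather than a termwise estimate: one checks $\sum_{k\in\zset^n\setminus\{0\}}|k|^{-2s}<\infty$ for $s>n/2$ by grouping $k$ into dyadic shells $|k|\sim 2^j$ (each of cardinality $\lesssim 2^{jn}$, contributing $\lesssim 2^{j(n-2s)}$) or, equivalently, by comparison with $\int_{|x|\geq 1}|x|^{-2s}\,dx$, which is finite exactly when $2s>n$.
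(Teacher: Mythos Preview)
Your proof is correct and follows the same route as the paper: (i)--(iv) are termwise consequences of the Fourier multiplier symbols and $|k|\ge 1$, (v) uses $(\rot f)_k=i\,k\times f_k$ together with $k\perp f_k$ so that $|k\times f_k|=|k|\,|f_k|$, and (vi) is Cauchy--Schwarz with the lattice sum $\sum_{k\ne 0}|k|^{-2s}$ finite for $s>n/2$. You actually supply more justification than the paper does---in particular the Lagrange-identity computation for (v) and the convergence argument for $K_s$---and you correctly flag the degenerate case $\alpha=0$ in (i), which the paper passes over.
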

\begin{proof}
(i)-(iv) follow immediately from the definition (\ref{eq:seminormDef}), (v) follows from $k\perp f_k$ and $\rot (f_k\cdot e^{i\cdot x\cdot k}) = i\cdot k\times f_k\cdot e^{i\cdot k\cdot k}$, i.e., $|k\times f_k| = |k|\cdot |f_k|$, and (vi) follows from the Cauchy--Schwarz inequality.
\end{proof}

Hence, instead of bounding all $\|\partial^\alpha\cdot\|_\infty$ we will bound all $\|\cdot\|_{\sA,d}$ and (ii) simplifies the calculations immensely. (vi) is the Sobolev imbedding.

The paper is structured as follows: In \Cref{sec:linear} we will prove the $C^\infty(\tset^n,\rset^n)$-valued version of Arzel\`a--Ascoli (\Cref{lem:arzelaascoli}). It is our main tool. In \Cref{thm:main} we then prove the existence of a unique smooth solution $u\in C^d([0,\infty),C^\infty(\tset^n)^n)$ of (\ref{eq:ens2}) and most importantly bound all semi-norms $\|u(\,\cdot\,,t)\|_{\sA,d}$ of the solution in terms of $v$. We denote by $C^d([0,T],C^\infty(\tset^n)^n)$ the set of functions $f:\tset^n\times [0,T]\to\rset^n$ which are $C^d$ in $t\in [0,T]$ and $C^\infty$ in $x\in \tset^n$. Only in \Cref{exm:simpleBreakDown} and \ref{exm:noBreakDown} we deal with complex valued functions and denote these by $C^d([0,T],C^\infty(\tset^n,\cset^n))$. In \Cref{sec:local} we will prove our main result (\Cref{thm:mainNS}). We show that the $n$-dimensional periodic Navier--Stokes equation has a unique smooth solution $u\in C^\infty([0,T^*),C^\infty(\tset^n)^n)$ with the lower bound
\[T^* \geq \frac{2\nu}{\|u_0\|_{\sA,0}^2}.\]
We also gain the smallness condition
\[\|u_0\|_{\sA,0} \leq \nu \qquad\Rightarrow\qquad T^*=\infty\]
and show
\[\|u(\,\cdot\,,t)\|_{\sA,0} < \infty\ \text{for all}\ t\in [0,T] \quad\Rightarrow\quad T^* > T.\]
New compared to other treatments is that the lower bound on $T^*$ depends only on the initial data $\|u_0\|_{\sA,0}$ and $\nu>0$ but is independent on $n$, derivatives of $u_0$, or unknown constants from Sobolev imbeddings or singular integrals. All proofs use only elementary calculations, the Arzel\`a--Ascoli theorem, and the Banach fixed point theorem. While with our approach we win dimension independent results we unfortunately lose the special case $n=2$. We recover $n=2$ by combining the $H^1$-bound of the vorticity and \Cref{thm:mainNS} with \Cref{lem:seminormProperties}(vi). In \Cref{sec:examples} we investigate special explicit examples of (\ref{eq:ens2}) and the growth of the Fourier coefficients.

\section{The unique solution of $\partial_t u = \nu\nabla u + \pset [v\nabla u]$ and its bounds}
\label{sec:linear}

$C^\infty(\tset^n)$ is a complete Montel space with the semi-norms $\|\partial^\alpha f\|_\infty$, $\alpha\in\nset_0^n$. The crucial property we need is that a subset $F\subset C^\infty(\tset^n)$ is bounded if and only if for all $\alpha\in\nset_0^n$ there are constants $C_\alpha>0$ such that $\|\partial^\alpha f\|_\infty \leq C_\alpha$ for all $f\in F$ and $\alpha\in\nset_0^n$. $C^\infty(\tset^n)$ has as a Montel space the Heine--Borel property, i.e., every bounded set $F$ is pre-compact. Since it is complete, $F$ has at least one accumulation point and the Arzel\`a--Ascoli Theorem (\Cref{lem:arzelaascoli}) holds. For more on Montel spaces see e.g.\ \cite{treves67} or \cite{schaef99}. We include the proof of \Cref{lem:arzelaascoli} to make the paper self-contained and since besides \cite{didio19ENS} we are not aware of a reference. The proof follows verbatim the proof in \cite[pp.\ 85--86]{yosida68}.

\begin{lem}[$C^\infty(\tset^n,\rset^n)$-valued version of Arzel\`a--Ascoli]\label{lem:arzelaascoli}
Let $n,m\in\nset$, $T>0$, and $\{f_N\}_{N\in\nset}\subset C([0,T],C^\infty(\tset^n)^m)$. Assume that
\begin{enumerate}[i)]
\item $\sup_{N\in\nset, t\in [0,T]} \|\partial_x^\alpha f_N(x,t)\|_\infty < \infty$ for all $\alpha\in\nset_0^n$, and

\item $\{f_N\}_{N\in\nset}$ is equi-continuous, i.e., for all $\varepsilon>0$ exists $\delta=\delta(\varepsilon) > 0$ such that for all $N\in\nset$ we have
\[|t-s|<\delta \quad\Rightarrow\quad \|f_N(x,t) - f_N(x,s)\|_\infty \leq \varepsilon.\]
\end{enumerate}
Then $\{f_N\}_{N\in\nset}$ is relatively compact in $C([0,T],C^\infty(\tset^n)^m)$.
\end{lem}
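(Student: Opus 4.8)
The plan is to run the classical diagonal-sequence proof of Arzel\`a--Ascoli, but with two ingredients adapted to the Montel setting: the role played by Bolzano--Weierstrass at each fixed time will be taken over by the Heine--Borel property of $E := C^\infty(\tset^n)^m$, and scalar equicontinuity will be replaced by equicontinuity in every seminorm $p_\alpha(\,\cdot\,) := \|\partial_x^\alpha\,\cdot\,\|_\infty$. Since $E$ is a complete Fr\'echet--Montel space, $C([0,T],E)$ is again a complete metrizable locally convex space whose topology is generated by the seminorms $f \mapsto \sup_{t\in[0,T]} p_\alpha(f(\,\cdot\,,t))$. Relative compactness therefore reduces to showing that every sequence drawn from $\{f_N\}_{N\in\nset}$ has a subsequence that is Cauchy, hence convergent, in each of these seminorms simultaneously.

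Before diagonalising I would upgrade hypothesis (ii), which only controls the seminorm $p_0 = \|\cdot\|_\infty$, to equicontinuity in every $p_\beta$. Fixing $\beta\in\nset_0^n$ and an integer $r > |\beta|$, I would apply a Landau--Kolmogorov--Gagliardo--Nirenberg interpolation inequality on $\tset^n$ to $h := f_N(\,\cdot\,,t) - f_N(\,\cdot\,,s)$: there are $\theta = |\beta|/r \in (0,1)$ and $C = C(n,\beta,r)$ with
\[
\|\partial^\beta h\|_\infty \;\le\; C\,\|h\|_\infty^{\,1-\theta}\Big(\textstyle\sum_{|\gamma|\le r}\|\partial^\gamma h\|_\infty\Big)^{\theta}.
\]
By (i) the high-order factor is bounded uniformly in $N$, $t$, $s$ by $M_r := 2\sum_{|\gamma|\le r}\sup_{N,t}\|\partial_x^\gamma f_N\|_\infty < \infty$, while by (ii) the factor $\|h\|_\infty$ is at most $\varepsilon$ once $|t-s| < \delta(\varepsilon)$. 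Hence $\|\partial^\beta(f_N(\,\cdot\,,t)-f_N(\,\cdot\,,s))\|_\infty \le C\,\varepsilon^{1-\theta}M_r^{\theta}$, which tends to $0$ with $\varepsilon$ uniformly in $N$; this is equicontinuity in $p_\beta$. I expect this interpolation step to be the main obstacle, because it is precisely what converts the single-seminorm hypothesis (ii), together with the uniform derivative bounds (i), into control of all the derivatives that the Montel topology of $E$ demands.

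With equicontinuity in hand for every seminorm, I would choose a countable dense set $D = \{t_1, t_2, \dots\} \subset [0,T]$. For each $j$, assumption (i) makes $\{f_N(\,\cdot\,,t_j) : N\in\nset\}$ a bounded subset of $E$, hence relatively compact by the Heine--Borel property; as $E$ is metrizable it contains a subsequence converging in $E$, i.e.\ in all seminorms. A standard diagonal extraction then yields one subsequence $(g_k) \subset \{f_N\}$ such that $g_k(\,\cdot\,,t_j)$ converges in $E$ for every $j$.

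Finally I would promote this pointwise convergence on $D$ to uniform convergence in $t$. Fixing a seminorm $p_\alpha$ and $\varepsilon > 0$, I pick $\delta$ from the $p_\alpha$-equicontinuity and a finite $\delta$-net $t_{j_1},\dots,t_{j_L}\in D$ of the compact interval $[0,T]$. For arbitrary $t$, choosing $t_{j_i}$ within $\delta$ and splitting
\[
p_\alpha\big(g_k(\,\cdot\,,t) - g_l(\,\cdot\,,t)\big) \le p_\alpha\big(g_k(\,\cdot\,,t) - g_k(\,\cdot\,,t_{j_i})\big) + p_\alpha\big(g_k(\,\cdot\,,t_{j_i}) - g_l(\,\cdot\,,t_{j_i})\big) + p_\alpha\big(g_l(\,\cdot\,,t_{j_i}) - g_l(\,\cdot\,,t)\big)
\]
bounds the two outer terms by $\varepsilon$ via equicontinuity and the middle term by $\varepsilon$ for $k,l$ large via convergence on the finite net. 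Thus $(g_k)$ is Cauchy in $\sup_t p_\alpha$ for every $\alpha$, hence Cauchy and, by completeness, convergent in $C([0,T],E)$. Since an arbitrary sequence from $\{f_N\}$ admits such a convergent subsequence, the family is relatively compact, as claimed.
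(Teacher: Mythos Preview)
Your argument is correct and follows the same diagonal-extraction skeleton as the paper: pick a countable dense set in $[0,T]$, use the Heine--Borel property of the Montel space $C^\infty(\tset^n)^m$ to extract a diagonal subsequence converging at each of those times, and then use equicontinuity plus a three-term splitting to pass to uniform convergence in $t$.

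The one genuine difference is your interpolation step. The paper never upgrades hypothesis~(ii) beyond the $\|\cdot\|_\infty$ seminorm: it runs the three-term argument only for $p_0$, obtains $\sup_t\|f_{N_i}(\cdot,t)-f_{N_j}(\cdot,t)\|_\infty\to 0$, and then simply asserts that the limit $f$ lies in $C([0,T],C^\infty(\tset^n))$ because the derivative bounds from~(i) survive in the limit. Strictly speaking that shows the limit is in the right space but does not, by itself, give convergence of the subsequence in every seminorm $\sup_t\|\partial^\alpha\cdot\|_\infty$, which is what ``relatively compact in $C([0,T],C^\infty(\tset^n)^m)$'' actually demands (and what the paper in fact uses later, e.g.\ in the line~$(*)$ of the proof of Theorem~2.2). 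Your Landau--Kolmogorov/Gagliardo--Nirenberg interpolation cleanly converts uniform $\|\cdot\|_\infty$-closeness plus the uniform bounds~(i) into equicontinuity in every $p_\beta$, so that the three-term argument runs simultaneously for all seminorms and yields honest convergence in $C([0,T],C^\infty(\tset^n)^m)$. In that sense your proof is a tightening of the paper's: same route, but with the gap at the end closed.
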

\begin{proof}
It is sufficient to prove the result for $m=1$. Then it holds in one component of $f_N$ and by choosing subsequences it holds in all components.

Let $\{t_k\}_{k\in\nset}\subset [0,T]$ be a dense countable subset such that for every $\varepsilon>0$ there is a $k(\varepsilon)\in\nset$ with
\[\sup_{t\in [0,T]} \inf_{1\leq k\leq k(\varepsilon)} |t-t_k| \leq \varepsilon.\]

For any $t\in [0,T]$ the set $\{f_N(\,\cdot\,,t)\}_{N\in\nset}$ is bounded in the complete Montel space $C^\infty(\tset^n)$ and therefore contains a convergent subsequence. Let $(N_{1,i})_{i\in\nset}\subseteq\nset$ be such that $(f_{N_{1,i}}(\,\cdot\,,t_1))_{i\in\nset}$ converges. Take a subsequence $(N_{2,i})_{i\in\nset}$ of $(N_{1,i})_{i\in\nset}$ such that $(f_{N_{2,i}}(\,\cdot\,,t_2))_{i\in\nset}$ converges. By the diagonal process of choice we get a subsequence $(f_{N_i})_{i\in\nset}$ with $N_i := N_{i,i}$ which converges for all $t_k$.

Let $\varepsilon>0$. By the equi-continuity of $\{f_N\}_{N\in\nset}$ there is a $\delta=\delta(\varepsilon)>0$ such that $|t-s|<\delta$ implies $\|f_N(x,t)-f_N(x,s)\|_\infty\leq\varepsilon$. Hence, for every $t\in [0,T]$ there exists a $k$ with $k\leq k(\varepsilon)$ such that
\begin{align*}
\|f_{N_{i}}(x,t) - f_{N_{j}}(x,t)\|_\infty
&\leq \|f_{N_{i}}(x,t) - f_{N_{i}}(x,t_k)\|_\infty + \|f_{N_i}(x,t_k) - f_{N_j}(x,t_k)\|_\infty\\
&\quad + \|f_{N_j}(x,t_k) - f_{N_j}(x,t)\|_\infty\\
&\leq 2\varepsilon + \|f_{N_i}(x,t_k) - f_{N_j}(x,t_k)\|_\infty.
\end{align*}
Thus $\displaystyle\lim_{i,j\to\infty} \sup_{t\in [0,T]} \|f_{N_{i}}(x,t) - f_{N_{j}}(x,t)\|_\infty \leq 2\varepsilon$ and since $\varepsilon>0$ was arbitrary we have $\displaystyle\lim_{i,j\to\infty} \sup_{t\in [0,T]} \|f_{N_{i}}(x,t) - f_{N_{j}}(x,t)\|_\infty =0$. So for every $x\in\tset^n$ the sequence $f_{N_i}(x,\,\cdot\,)$ converges uniformly on $[0,T]$ to a continuous function $f(x,\,\cdot\,)$. Hence by construction $f(\,\cdot\,,t_k)\in C^\infty(\tset^n)$ for all $t_k$ dense in $[0,T]$. But for all $\alpha\in\nset_0^n$
\[\|\partial^\alpha f(x,t)\|_\infty \leq \sup_{s\in [0,T], N\in\nset} \|\partial^\alpha f_N(x,s)\|_\infty < \infty\]
implies $f(\,\cdot\,,t)\in C^\infty(\tset^n)$ for all $t\in [0,T]$, i.e., $f\in C([0,T],C^\infty(\tset^n))$.
\end{proof}

We will now show uniqueness, smoothness, and existence of a solution of (\ref{eq:ens2}).

\begin{thm}\label{thm:main}
Let $n,r\in\nset_0$ with $n\geq 2$, $\nu>0$, $v\in C^r([0,\infty),C^\infty(\tset^n)^n)$ be a periodic real function with $\divv v(x,t) = 0$ for all $t\in [0,\infty)$, and $u_0\in C^\infty(\tset^n)^n$ with $\divv u_0=0$. Then
\begin{align*}
\partial_t u(x,t)&=\nu\Delta u(x,t)+\pset[v(x,t)\nabla u(x,t)]\\
u(x,0) &= u_0(x)
\end{align*}
has a unique smooth solution $u\in C^{r+1}([0,\infty),C^\infty(\tset^n)^n)$. It fulfills
\[\int_{\tset^n} u(x,t)~\diff x = \int_{\tset^n} u_0(x)~\diff x\]
for all $t\in [0,\infty)$ and it fulfills the bounds
\begin{equation}\label{eq:normBound}
\|u(x,t)\|_{\sA,0} \leq \|u_0\|_{\sA,0}\cdot \exp\left(\frac{1}{4\nu}\int_0^t \|v(x,s)\|_{\sA,0}^2~\diff s\right),
\end{equation}
\begin{align}\label{eq:firstBound}
\|u(x,t)\|_{\sA,1} &\leq \|u_0\|_{\sA,1}\cdot
\exp\left(\int_0^t\!\! \|v(x,s)\|_{\sA,1}~\diff s + \frac{1}{4\nu}\int_0^t\!\! \|v(x,s)\|_{\sA,0}^2~\diff s \right),
\end{align}
and for all $d\geq 2$ the bounds
\begin{align}\label{eq:higherBounds}
\|u(x,t)\|_{\sA,d} &\leq \exp\left(\frac{1}{4\nu}\int_0^t \|v(x,s)\|_{\sA,0}^2~\diff s + d\cdot \int_0^t \|v(x,s)\|_{\sA,1}~\diff s \right)\\
&\quad\times\! \left(\! \|u_0\|_{\sA,d} + \sum_{j=2}^d \begin{pmatrix} d\\ j\end{pmatrix}\cdot \int_0^t\!\!\! \|v(x,s)\|_{\sA,j}~\diff s \cdot\!\! \sup_{s\in [0,t]}\!\! \|u(x,s)\|_{\sA,d+1-j}\!\!\right)\notag
\end{align}
for all $t\in [0,\infty)$.

If $\|v(x,t)\|_{\sA,0} + \delta\leq \nu$ for all $t\in [0,\tau]$ with $\tau\geq 0$ and $\delta\geq 0$, then
\begin{equation}\label{eq:normBoundzwei}
\|u(x,t)\|_{\sA,0} \leq \|u_0\|_{\sA,0}\cdot e^{-\delta\cdot t}
\end{equation}
and
\begin{align}\label{eq:firstBoundzwei}
\|u(x,t)\|_{\sA,1} &\leq \|u_0\|_{\sA,1}\cdot
\exp\left(\int_0^t \|v(x,s)\|_{\sA,1}~\diff s -\delta\cdot t\right)
\end{align}
for all $t\in [0,\tau]$.
\end{thm}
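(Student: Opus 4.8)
The plan is to transfer everything to the Fourier side, where \eqref{eq:ens2} is equivalent to the linear ODE system \eqref{eq:ak} for the coefficients $a_k(t)$, and to base the whole proof on a single \emph{master differential inequality} for the semi-norms $\|u\|_{\sA,d}$. I would organize the argument into five parts: (a) a priori estimates for any sufficiently regular solution; (b) construction of a solution by finite Fourier truncation and the Arzel\`a--Ascoli Lemma \Cref{lem:arzelaascoli}; (c) uniqueness by linearity; (d) a regularity bootstrap in $t$; and (e) conservation of the mean. The analytic heart, and the only genuinely new ingredient, is part (a); everything else is a fairly standard scaffolding built around it.

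For part (a) I would differentiate $\|u\|_{\sA,d}=\sum_{k\neq0}|k|^d|a_k|$ term by term. From $\tfrac{d}{dt}|a_k|^2=2\,\mathrm{Re}(\bar a_k\dot a_k)$ and \eqref{eq:ak}, the dissipative part contributes $-\nu|k|^2|a_k|$, while the convolution part is controlled by $|\langle b_{k-l},l\rangle\,\pset_k a_l|\le|b_{k-l}|\,|l|\,|a_l|$ (Cauchy--Schwarz together with $|\pset_k a_l|\le|a_l|$, which is \Cref{lem:seminormProperties}(ii) mode-wise). The decisive observation is that the drift mode $l=k$ drops out: its contribution to $\tfrac{d}{dt}|a_k|^2$ is $2\,\mathrm{Re}\big(i\langle b_0,k\rangle\,\bar a_k^{\,T}\pset_k a_k\big)=0$, since $\bar a_k^{\,T}\pset_k a_k=|\pset_k a_k|^2\in\rset$. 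Summing over $k$ and applying $|k|^d\le\sum_{j=0}^d\binom{d}{j}|l|^{d-j}|k-l|^j$ yields
$$\frac{d}{dt}\|u(t)\|_{\sA,d}\le-\nu\,\|u(t)\|_{\sA,d+2}+\sum_{j=0}^d\binom{d}{j}\|v(t)\|_{\sA,j}\,\|u(t)\|_{\sA,d+1-j}.$$
From here each stated bound is elementary. The Cauchy--Schwarz interpolation $\|u\|_{\sA,d+1}\le\|u\|_{\sA,d}^{1/2}\|u\|_{\sA,d+2}^{1/2}$ with Young's inequality $ab\le\tfrac1{4\nu}a^2+\nu b^2$ absorbs the $j=0$ term into $-\nu\|u\|_{\sA,d+2}$. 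For $d=0,1$ the remaining terms are proportional to $\|u\|_{\sA,d}$, so the inequality closes and Gronwall gives \eqref{eq:normBound} and \eqref{eq:firstBound}; for $d\ge2$ the terms $j\ge2$ involve only strictly lower semi-norms, and the linear estimate $\dot y\le a(t)y+g(t)$ with $g$ majorized by $\sup_{[0,t]}\|u\|_{\sA,d+1-j}$ gives \eqref{eq:higherBounds}. The smallness bounds \eqref{eq:normBoundzwei} and \eqref{eq:firstBoundzwei} come from the same inequality by instead invoking the monotonicity \Cref{lem:seminormProperties}(iii): $-\nu\|u\|_{\sA,d+2}+\|v\|_{\sA,0}\|u\|_{\sA,d+1}\le(\|v\|_{\sA,0}-\nu)\|u\|_{\sA,d+1}\le-\delta\|u\|_{\sA,d+1}\le-\delta\|u\|_{\sA,d}$.

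For part (b) I would solve the finite-dimensional truncated system (keep $|k|\le K$, set higher modes to zero), a linear ODE with continuous coefficients $b_{k-l}(t)$, hence uniquely globally solvable. Because truncation only deletes nonnegative terms from the convolution sum, each $u_K$ satisfies the \emph{same} master inequality with the \emph{same} right-hand side, so induction on $d$ yields the bounds above uniformly in $K$ on every $[0,T]$. Via \Cref{lem:seminormProperties}(i) these control all $\|\partial^\alpha u_K\|_\infty$ uniformly, and the equation bounds $\partial_t u_K$ in each semi-norm, giving equi-continuity in $t$. \Cref{lem:arzelaascoli} then extracts a subsequence converging in $C([0,T],C^\infty(\tset^n)^n)$; the limit solves \eqref{eq:ens2} since the equation is linear in $u$ and all operations are continuous on the Montel space, and the uniform bounds pass to the limit. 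Uniqueness (part c) is immediate: by linearity the difference $w$ of two solutions solves the homogeneous problem with $w(0)=0$, so \eqref{eq:normBound} forces $\|w\|_{\sA,0}\equiv0$, and with mean conservation $w\equiv0$. Smoothness in time (part d) follows by bootstrapping $\partial_t u=\nu\Delta u+\pset[v\nabla u]$: the right-hand side is continuous, so $u\in C^1$, and differentiating $r$ times using $v\in C^r$ upgrades this to $u\in C^{r+1}$. Finally (part e), the $0$-mode equation gives $\dot a_0=\tfrac{1}{(2\pi)^n}\int_{\tset^n}v\nabla u\,\diff x=0$ because $\divv v=0$ turns $v\nabla u_i$ into $\divv(u_i v)$, so the mean is conserved.

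I expect the main obstacle to be making part (a) fully rigorous rather than formal: justifying the term-by-term differentiation of the infinite $\ell^1$-type sum $\sum_k|k|^d|a_k|$ (cleanest by first arguing on the truncations, where the sum is finite, and only then passing to the limit), correctly handling the non-differentiability of $|a_k|$ at its zeros, and carrying the uniform-in-$K$ bounds through the induction on $d$ so that \emph{all} semi-norms --- and hence the full $C^\infty$ regularity demanded by the Montel-space framework --- are controlled simultaneously.
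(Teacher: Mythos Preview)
Your proposal is correct and yields exactly the same bounds, but the route is genuinely different from the paper's. The paper does \emph{not} derive a differential inequality for $\|u\|_{\sA,d}$; instead it builds an explicit time-splitting approximation $u_N$ (one piecewise step of heat flow plus one step of transport on each subinterval $[t_{j-1},t_j]$), bounds $\|u_N\|_{\sA,d}$ directly on this discrete iteration, and then passes to the limit $\Delta\cZ\to 0$ via \Cref{lem:arzelaascoli}. The crucial $\tfrac{1}{4\nu}\|v\|_{\sA,0}^2$ factor there comes from maximizing the quadratic exponent $|k|\int\|v\|_{\sA,0}-\nu k^2\Delta t$ over $|k|$, whereas you obtain it from the Cauchy--Schwarz interpolation $\|u\|_{\sA,d+1}\le\|u\|_{\sA,d}^{1/2}\|u\|_{\sA,d+2}^{1/2}$ followed by Young's inequality --- two faces of the same elementary optimization. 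Your construction step also differs: you use spatial Fourier (Galerkin) truncation, the paper uses temporal discretization. Finally, for uniqueness the paper falls back on the classical $L^2$ energy identity $\partial_t\|u-\tilde u\|_{L^2}^2=-2\nu\|\nabla(u-\tilde u)\|_{L^2}^2$ (exploiting $\divv v=0$), while you invoke linearity and \eqref{eq:normBound} directly, which is arguably more in keeping with the $\|\cdot\|_{\sA,d}$ framework. Your approach is tighter and closer to standard energy-method PDE practice; the paper's approach is more constructive and consonant with the Trotter/splitting philosophy announced in the introduction, and has the side benefit that the approximants $u_N$ reappear verbatim in the proof of \Cref{thm:mainNS}.
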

\begin{proof}
Let $T>0$. For any $N\in\nset$ take a decomposition $\cZ = \{t_0=0,t_1,\dots, t_N=T\}$ of  $[0,T]$ with $t_0 < t_1 <\dots<t_N$ and $\Delta\cZ := \max_{j=0,\dots,N-1} |t_{j+1}-t_j|$. Since $v$ is in $C^r([0,T],C^\infty(\tset^n)^n)$ we have
\[v(x,t) = \sum_{l\in\zset^n} b_l(t)\cdot e^{i\cdot l\cdot x}\]
where $b_l$ are the Fourier coefficients of $v$.

Let $\{u_N\}_{N\in\nset}$ be a family of functions $u_N:\tset^n\times [0,T]\to\rset$ defined in the following way. Each function $u_N$ is piece-wise on $[t_j,t_{j+1}]$, $j=0,\dots,N-1$, defined by its Fourier coefficients
\begin{align}\label{eq:firstInteration}
a_{k,N}(t) &:= a_{k,N}(t_j)\cdot e^{-\nu\cdot k^2\cdot (t-t_j) + i\cdot\int_{t_j}^t \langle b_0(s),k\rangle~\diff s}\\
&\quad + i\cdot \sum_{l\in\zset^n\setminus\{k\}} \int_{t_j}^{t} \langle b_{k-l}(s),l\rangle~\diff s\cdot \pset_k a_{l,N}(t_j)\cdot e^{-\nu\cdot l^2\cdot (t-t_j) + i\cdot\int_{t_j}^t \langle b_0(s),l\rangle~\diff s}\notag
\end{align}
for all $t\in [t_j,t_{j+1}]$, i.e.,
\begin{equation}\label{eq:secondInteration}
\partial_t a_{k,N}(t) \quad\xrightarrow{t\searrow t_j}\quad -\nu\cdot k^2\cdot a_{k,N}(t_j) + i\cdot \sum_{l\in\zset^n} \langle b_{k-l}(t_j),k\rangle\cdot\pset_k a_{l,N}(t_j).
\end{equation}
(\ref{eq:firstInteration}) is after reordering the following: For each $k\in\zset^n$ the component $a_k(t_j)\cdot e^{i\cdot k\cdot x}$ of $u$ at time $t_j$ becomes at $t\in[t_j,t_{j+1}]$
\begin{align}\label{eq:splitDrei}
&a_{k,N}(t_j)\cdot e^{-\nu\cdot k^2\cdot (t-t_j)}\cdot e^{i\cdot k\cdot x + i\cdot\int_{t_j}^t \langle b_0(s),k\rangle~\diff s}\\
&\quad + i\!\!\sum_{l\in\zset^n\setminus\{0\}} \int_{t_j}^t\langle b_l(s),k\rangle~\diff s\cdot \pset_{k+l} a_{k,N}(t_j)\cdot e^{-\nu\cdot k^2\cdot (t-t_j)+i\cdot\int_{t_j}^t\langle b_0(s),k\rangle\diff s}\cdot e^{i\cdot (k+l)\cdot x}.\notag
\end{align}
Hence, both (\ref{eq:firstInteration}) and (\ref{eq:splitDrei}) contain the same summands. If we sum in the following over the absolute values of the Fourier coefficients both formulations give the same sum, i.e., only a renumbering of the indices appears.

Since $\divv v(x,t)=0$ we have $\langle b_l(s),l\rangle=0$ and hence $a_{0,N}(t)=a_0(0)$ for all $t\in [0,T]$ and $N\in\nset$. Since $b_0(t)\in\rset^n$ for all $t\in [0,T]$ the exponent $i\cdot\int_{t_j}^t \langle b_0(s),k\rangle~\diff s$ induces only a complex rotation, i.e., the absolute values are unchanged. Since we only sum the absolute values, for notational simplicity we let $b_0=0$, i.e., we ignore this imaginary exponent in the following calculations.

In the following we show that for each $d\in\nset_0$ there exists a $C_d > 0$ such that
\[\sup_{\substack{\alpha\in\nset_0^n:|\alpha|=d,\\ N\in\nset,\ t\in [0,T]}} \|\partial^\alpha u_N(x,t)\|_\sA \leq \sup_{t\in [0,T],N\in\nset} \|u_N(x,t)\|_{\sA,d}\leq  C_d\]
to apply \Cref{lem:arzelaascoli}. Since $T>0$ is arbitrary, it is is sufficient to look at $t = T = t_N$.

For $\|u_N(x,t)\|_{\sA,0}$ we get from (\ref{eq:splitDrei}) directly
\begin{align*}
&\phantom{=}\;\;\|u_N(x,t_N)\|_{\sA,0}\\
&= \sum_{k\in\zset^n\setminus\{0\}} |a_{k,N}(t_N)|\\
&\leq \sum_{k\in\zset^n\setminus\{0\}} |a_{k,N}(t_{N-1})|\cdot e^{-\nu\cdot k^2\cdot (t_N-t_{N-1})}\\
&\quad + \sum_{k,l\in\zset^n\setminus\{0\}} \int_{t_{N-1}}^{t_N} \left|\langle b_l(s), k\rangle~\diff s\right|\cdot |a_{k,N}(t_{N-1})|\cdot e^{-\nu\cdot k^2\cdot (t_N-t_{N-1})}\\
&= \sum_{k\in\zset^n\setminus\{0\}} |a_{k,N}(t_{N-1})|\cdot e^{-\nu\cdot k^2\cdot (t_N-t_{N-1})}\cdot \left[ 1 + \sum_{l\in\zset^n\setminus\{0\}} \int_{t_{N-1}}^{t_N} |\langle b_l(s), k\rangle|~\diff s \right]\\
&\leq \sum_{k\in\zset^n\setminus\{0\}}\!\!\!\! |a_{k,N}(t_{N-1})|\cdot \exp\left(\!|k|\int_{t_{N-1}}^{t_N} \|v(x,s)\|_{\sA,0}~\diff s - \nu k^2 (t_N-t_{N-1})\! \right) \tag{\$}
\intertext{If $\|v(x,s)\|_{\sA,0}+\delta\leq \nu$ for all $s\in [0,T]$, then $\|v(x,s)\|_{\sA,0}-\nu\cdot |k|\leq \|v(x,s)\|_{\sA,0}-\nu \leq -\delta$ and we get}
&\leq \sum_{k\in\zset^n\setminus\{0\}} |a_{k,N}(t_{N-1})|\cdot e^{-\delta\cdot (t_N-t_{N-1})}\\
&\ \;\vdots\\
&\leq \sum_{k\in\zset^n\setminus\{0\}} |a_{k,N}(t_0)|\cdot e^{-\delta\cdot (t_N-t_0)} = \|u_0\|_{\sA,0}\cdot e^{-\delta\cdot T}
\intertext{proving (\ref{eq:normBoundzwei}). Assume $\|v(x,s)\|_{\sA,0}\not\leq\nu$, set $K := \nu^{-1}\max_{s\in [t_{N-1},t_N]} \|v(x,s)\|_{\sA,0}$, then for all $k$ with $|k| \geq K$ we have $|k|\int_{t_{N-1}}^{t_N} \|v(x,s)\|_{\sA,0}~\diff s - \nu k^2 (t_{N-1}-t_N)\leq 0$ (contraction) and for $0 < |k| < K$ we have the exponent possibly $>0$ (expansion) with maximum at $\frac{K}{2}$. We devide the sum (\$) into contraction and expansion parts and use the maximum for the expansions (keep this argument in mind, we will use it several times throughout the proof)}
&\leq \sum_{k:|k|\geq K} |a_{k,N}(t_{N-1})| \tag{\#}\\
&\quad + \exp\left(\frac{(t_N-t_{N-1})\max\limits_{s\in [t_{N-1},t_N]} \|v(x,s)\|_{\sA,0}^2}{4\nu} \right)\cdot \sum_{k\neq 0: |k|< K}\!\! |a_k(t_{N-1})|\\
&\leq \exp\left(\frac{(t_N-t_{N-1})\max\limits_{s\in [t_{N-1},t_N]} \|v(x,s)\|_{\sA,0}^2}{4\nu} \right)\cdot \sum_{k\in\zset^n\setminus\{0\}} |a_{k,N}(t_{N-1})|\\
&\ \;\vdots\\
&\leq \exp\left(\frac{1}{4\nu}\sum_{j=1}^N (t_j-t_{j-1})\max\limits_{s\in [t_{j-1},t_j]} \|v(x,s)\|_{\sA,0}^2\right)\cdot \sum_{k\in\zset^n\setminus\{0\}} |a_{k,N}(t_0)|
\intertext{which goes for $N\to\infty$ with $\Delta\cZ\to 0$ to}
&\to \|u_0(x)\|_{\sA,0}\cdot \exp\left(\frac{1}{4\nu} \int_0^T \|v(x,s)\|_{\sA,0}^2~\diff s\right).
\end{align*}
Since the bound converges there exists a $C_0>0$ such that $\|u_N(x,t)\|_{\sA,0}\leq C_0$ for all $t\in [0,T]$ and $N\in\nset$.

For the derivatives ($|\alpha|\geq 1$) it is sufficient to bound $\|u_N(x,t)\|_{\sA,|\alpha|}$. For simplicity we drop the index $N$ in $a_{k,N}$ and only write $a_k$ (but keep in mind that these $a_k$'s still depend on $N$, see (\ref{eq:firstInteration}) and (\ref{eq:splitDrei})).

For the first derivatives $d = |\alpha|=1$ we get from (\ref{eq:splitDrei})
\begin{align*}
&\;\;\;\; \|u_N(x,t_N)\|_{\sA,1} = \sum_{k\in\zset^n\setminus\{0\}} |k|\cdot |a_k(t_N)|\\
&\leq \sum_{k\in\zset^n\setminus\{0\}} |k|\cdot |a_{k,N}(t_{N-1})|\cdot e^{-\nu\cdot k^2\cdot (t_N - t_{N-1})}\\
&\quad + \sum_{k,l\in\zset^n\setminus\{0\}} |k+l|\cdot \int_{t_{N-1}}^{t_N} |\langle b_l(s),k\rangle|~\diff s\cdot |a_k(t_{N-1})|\cdot e^{-\nu\cdot k^2\cdot (t_N-t_{N-1})}\\
&= \sum_{k\in\zset^n\setminus\{0\}}\!\!\!\!\! |a_k(t_{N-1})|\cdot e^{-\nu\cdot k^2\cdot (t_N-t_{N-1})}\cdot \left(|k| + \!\!\!\!\!\sum_{l\in\zset^n\setminus\{0\}}\!\!\!\!\! |k+l| \int_{t_{N-1}}^{t_N} |\langle b_l(s),k\rangle|~\diff s\right)\\
&\leq \sum_{k\in\zset^n\setminus\{0\}} |k|\cdot |a_k(t_{N-1})|\cdot e^{-\nu\cdot k^2\cdot (t_N - t_{N-1})}\cdot \left( 1 + \sum_{l\in\zset^n\setminus\{0\}} \int_{t_{N-1}}^{t_N} |\langle b_l(s),k\rangle|~\diff s\right.\\
&\qquad\qquad \left. + \sum_{l\in\zset^n\setminus\{0\}} |l|\cdot \int_{t_{N-1}}^{t_N} |b_l(s)|~\diff s\right)\\
&\leq \sum_{k\in\zset^n\setminus\{0\}} |k|\cdot |a_k(t_N)|\cdot \exp\left( \int_{t_{N-1}}^{t_N} \|v(x,s)\|_{\sA,1}~\diff s\right.\\
&\qquad\qquad \left. + \int_{t_{N-1}}^{t_N} \|v(x,s)\|_{\sA,0}~\diff s\cdot |k| - \nu\cdot k^2\cdot (t_N-t_{N-1}) \right)\\
&\leq \sum_{k\in\zset^n\setminus\{0\}} |k|\cdot |a_k(t_N)|\cdot \exp\left(\int_{t_{N-1}}^{t_N} \|v(x,s)\|_{\sA,1}~\diff s\right.\tag{§}\\
&\qquad\qquad \left. + |k|\cdot (t_N-t_{N-1})\cdot \left(\max_{s\in [t_{N-1},t_N]} \|v(x,s)\|_{\sA,0} - \nu\cdot |k|\right) \right)
\intertext{If $\|v(x,t)\|_{\sA,0}+ \delta\leq \nu$ for all $t\in [0,T]$, then $\|v(x,t)\|_{\sA,0}-\nu \leq -\delta$ and we get}
&\leq \sum_{k\in\zset^n\setminus\{0\}} |k|\cdot |a_k(t_N)|\cdot \exp\left(\int_{t_{N-1}}^{t_N} \|v(x,s)\|_{\sA,1}~\diff s - \delta\cdot (t_N-t_{N-1})\right)\\
&\ \;\vdots\\
&\leq \sum_{k\in\zset^n\setminus\{0\}} |k|\cdot |a_k(t_0)|\cdot \exp\left(\int_{t_{0}}^{t_N} \|v(x,s)\|_{\sA,1}~\diff s - \delta\cdot (t_N-t_0)\right)\\
&= \|u_0\|_{\sA,1}\cdot \exp\left(\int_0^T \|v(x,s)\|_{\sA,1}~\diff s - \delta\cdot T\right) =:C_1
\intertext{which proves (\ref{eq:firstBoundzwei}). If $\|v(x,t)\|_{\sA,0}+ \delta\not\leq \nu$, then equivalently to the previous case of $\|u_N(x,t)\|_{\sA,0}$ we devide the sum (§) into a contraction part (for $|k|\geq K$) and an expansion part (for $|k|<K$) with $K := \nu^{-1}\cdot\max_{s\in [t_{N-1},t_N]} \|v(x,s)\|_{\sA,0}$ and in the expansion part its maximum is at $K/2$}
&\leq \sum_{k:|k|\geq K} |k|\cdot |a_k(t_{N-1})|\cdot \exp\left(\int_{t_{N-1}}^{t_N} \|v(x,s)\|_{\sA,1}~\diff s\right)\\
&\quad + \sum_{k:0<|k|< K} |k|\cdot |a_k(t_{N-1})|\cdot \exp\left(\int_{t_{N-1}}^{t_N} \|v(x,s)\|_{\sA,1}~\diff s\right.\\
&\qquad\qquad\qquad \left. + \frac{t_N-t_{N-1}}{4\nu}\cdot \max_{s\in [t_{N-1},t_N]} \|v(x,s)\|_{\sA,0}^2 \right)\\
&\leq \exp\left(\int_{t_{N-1}}^{t_N} \|v(x,s)\|_{\sA,1}~\diff s + \frac{t_N-t_{N-1}}{4\nu}\cdot \max_{s\in [t_{N-1},t_N]} \|v(x,s)\|_{\sA,0}^2 \right)\\
&\qquad \times\sum_{k\in\zset^n} |k|\cdot |a_k(t_{N-1})|\\
&\ \;\vdots\\
&\leq \exp\left(\int_{0}^{T} \|v(x,s)\|_{\sA,1}~\diff s + \sum_{j=1}^N \frac{t_j-t_{j-1}}{4\nu}\cdot \max_{s\in [t_{j-1},t_j]} \|v(x,s)\|_{\sA,0}^2 \right)\\
&\qquad \times \sum_{k\in\zset^n} |k|\cdot |a_k(0)|
\intertext{which goes for $N\to\infty$ with $\Delta\cZ\to 0$ to}
&\rightarrow \|u_0\|_{\sA,1}\cdot \exp\left(\int_{0}^{T}\!\! \|v(x,s)\|_{\sA,1}~\diff s + \frac{1}{4\nu}\cdot\int_0^T\!\! \|v(x,s)\|_{\sA,0}^2~\diff s \right)
\end{align*}
and proves (\ref{eq:firstBound}). Since the limit converges the sequence is bounded, i.e., it exists a $C_1>0$ such that
\[\|\partial_j u_N(x,t)\|_\sA \leq \|u_N(x,t)\|_{\sA,1} \leq C_1\]
for all $j=1,\dots,n$, $t\in [0,T]$, and $N\in\nset$.

We have so far proved that for $d=0$ and $1$ there are constants $C_d$ with
\[\|u_N(x,t)\|_{\sA,d}\leq C_d\tag{\%}\]
for all $t\in [0,T]$ and $N\in\nset$. We will now prove (\ref{eq:higherBounds}) and the existence of such constants $C_d$ for all $d\geq 2$ by induction on $d$.

(\ref{eq:firstBound}) is (\ref{eq:higherBounds}) for $d=1$ since then the sum $\sum_{j=2}^d$ is empty. So for $d=1$ (\ref{eq:higherBounds}) is true. So assume for $d\in\nset$ there are constants $C_j$ with $\|u_N(x,t)\|_{\sA,j}\leq C_j$ for all $j=0,\dots,d-1$, $t\in [0,T]$ and $N\in\nset$. We show then also $C_d$ exists.

From (\ref{eq:splitDrei}) we get
\begin{align*}
&\phantom{=}\;\, \|u_N(x,t_N)\|_{\sA,d}\\
&= \sum_{k\in\zset^n\setminus\{0\}} |k|^d\cdot |a_k(t_N)|\\
&\leq \sum_{k\in\zset^n\setminus\{0\}} |k|^d\cdot |a_k(t_{N-1})|\cdot e^{-\nu\cdot k^2\cdot (t_N - t_{N-1})}\\
&\quad + \sum_{k,l\in\zset^n\setminus\{0\}} |k+l|^d\cdot \int_{t_{N-1}}^{t_N} |\langle b_l(s),k\rangle|~\diff s\cdot |a_k(t_{N-1})|\cdot e^{-\nu\cdot k^2\cdot (t_N-t_{N-1})}\\
&\leq \sum_{k\in\zset^n\setminus\{0\}} |k|^d\cdot |a_k(t_{N-1})|\cdot e^{-\nu\cdot k^2\cdot (t_N - t_{N-1})}\\
&\quad + \sum_{k,l\in\zset^n\setminus\{0\}} |k|^d\cdot \int_{t_{N-1}}^{t_N} |\langle b_l(s),k\rangle|~\diff s\cdot |a_k(t_{N-1})|\cdot e^{-\nu\cdot k^2\cdot (t_N-t_{N-1})} \\
&\quad + d\cdot\sum_{k,l\in\zset^n\setminus\{0\}} |k|^d\cdot |l| \int_{t_{N-1}}^{t_N} |b_l(s)|~\diff s\cdot |a_k(t_{N-1})|\cdot e^{-\nu\cdot k^2\cdot (t_N-t_{N-1})} \\
&\quad + \sum_{k,l\in\zset^n\setminus\{0\}} \sum_{j=2}^{d} \begin{pmatrix} d\\ j\end{pmatrix}\cdot |k|^{d-j}\cdot |l|^j\cdot |k|\cdot \int_{t_{N-1}}^{t_N} |b_l(s)|~\diff s\cdot |a_k(t_{N-1})|\cdot e^{-\nu\cdot k^2\cdot (t_N - t_{N-1})}\\
&\leq \sum_{k\in\zset^n\setminus\{0\}} |k|^d\cdot |a_k(t_{N-1})|\cdot e^{-\nu\cdot k^2\cdot (t_N-t_{N-1})}\cdot \left( 1 + |k|\cdot\int_{t_{N-1}}^{t_N} \|v(x,s)\|_{\sA,0}~\diff s \right.\\
&\qquad\qquad \left. + d\cdot \int_{t_{N-1}}^{t_N} \|v(x,s)\|_{\sA,1}~\diff s \right)\\
&\quad + \sum_{j=2}^d \begin{pmatrix} d\\ j\end{pmatrix}\cdot \int_{t_{N-1}}^{t_N} \|v(x,s)\|_{\sA,j}~\diff s \cdot \|u_N(t_{N-1})\|_{\sA,d+1-j}\\
&\leq \sum_{k\in\zset^n\setminus\{0\}} |k|^d\cdot |a_k(t_{N-1})|\cdot \exp\left( |k|\cdot \int_{t_{N-1}}^{t_N} \|v(x,s)\|_{\sA,0}~\diff s - \nu\cdot k^2\cdot (t_N-t_{N-1})\right.\\
&\qquad\qquad \left. + d\cdot \int_{t_{N-1}}^{t_N} \|v(x,s)\|_{\sA,1}~\diff s \right)\\
&\quad + \sum_{j=2}^d \begin{pmatrix} d\\ j\end{pmatrix}\cdot \int_{t_{N-1}}^{t_N} \|v(x,s)\|_{\sA,j}~\diff s\cdot \sup_{s\in [0,T]} \|u_N(x,s)\|_{\sA,d+1-j}\\
&\leq \exp\left(\frac{t_N-t_{N-1}}{4\nu}\max_{s\in [t_{N-1},t_N]} \|v(x,s)\|_{\sA^2,0} + d\cdot \int_{t_{N-1}}^{t_N} \|v(x,s)\|_{\sA,1}~\diff s \right)\\
&\qquad\times \sum_{k\in\zset^n} |k|^d \cdot |a_k(t_{N-1})|\\
&\quad + \sum_{j=2}^d \begin{pmatrix} d\\ j\end{pmatrix}\cdot \int_{t_{N-1}}^{t_N} \|v(x,s)\|_{\sA,j}~\diff s\cdot \sup_{s\in [0,T]} \|u_N(x,s)\|_{\sA,d+1-j}\\
&\ \;\vdots\\
&\leq \exp\left(\sum_{j=1}^N\frac{t_j-t_{j-1}}{4\nu}\max_{s\in [t_{j-1},t_j]} \|v(x,s)\|_{\sA,0}^2 + d\cdot \int_{t_0}^{t_N} \|v(x,s)\|_{\sA,1}~\diff s \right)\\
&\quad\times \left( \|u_0\|_{\sA,d} + \sum_{j=2}^d \begin{pmatrix} d\\ j\end{pmatrix}\cdot  \int_{t_0}^{t_N} \|v(x,s)\|_{\sA,1}~\diff s\right)\cdot \sup_{s\in [0,T]} \|u_N(x,s)\|_{\sA,d+1-j}
\intertext{The bound depends only on $\|u_N(x,s)\|_{\sA,j}$ for $j=0,1,\dots,d-1$ (all converge by induction hypothesis) and hence the expression goes for $N\to\infty$ with $\Delta\cZ\to 0$ to}
&\to \exp\left(\frac{1}{4\nu}\int_0^T \|v(x,s)\|_{\sA,0}^2~\diff s + d\cdot \int_0^T \|v(x,s)\|_{\sA,1}~\diff s \right)\\
&\quad\times \left( \|u_0\|_{\sA,d} + \sum_{j=2}^d \begin{pmatrix} d\\ j\end{pmatrix}\cdot  \int_0^T\!\!\! \|v(x,s)\|_{\sA,j}~\diff s \cdot \sup_{s\in [0,T]} \|u(x,s)\|_{\sA,d+1-j}\right)
\end{align*}
which proves (\ref{eq:higherBounds}) for $d$. Since the limit exists the sequence is bounded, i.e., it exists a constant $C_d$ such that (\%) holds. Hence, by induction (\ref{eq:higherBounds}) and (\%) hold for all $d\in\nset$ with $d\geq 2$.

We apply now \Cref{lem:arzelaascoli}. We just proved
\[\max_{\alpha\in\nset_0^n:|\alpha|=d} \|\partial^\alpha u_N(x,t)\|_\infty \leq \sup_{s\in [0,T],N\in\nset} \|u_N(x,s)\|_{\sA,d} \leq C_d <\infty\]
for all $d\in\nset$, i.e., condition (i) of \Cref{lem:arzelaascoli} is fulfilled for the family $\{u_N\}_{N\in\nset}$. Since all $u_N$ are piece-wise differentiable in $t$ with (\ref{eq:secondInteration}) and all derivatives of $\partial^\alpha u_N$ are bounded, the family $\{u_N\}_{N\in\nset}$ is Lipschitz in $t$ with a Lipschitz constant $L$ independent on $N\in\nset$ and $t\in [0,T]$, i.e., $\{u_N\}_{N\in\nset}$ is equi-continuous and condition (ii) of \Cref{lem:arzelaascoli} is fulfilled. Hence, by \Cref{lem:arzelaascoli} the family $\{u_N\}_{N\in\nset}$ is relatively compact and there exists an accumulation point $u\in C([0,T],C^\infty(\tset^n)^n)$ and a subsequence $\{N_j\}_{j\in\nset}\subseteq\nset$ such that for all $\alpha\in\nset_0^n$ we have
\[\sup_{t\in [0,T]} \|\partial^\alpha u_{N_j}(x,t) - \partial^\alpha u(x,t)\|_\infty \xrightarrow{j\to\infty} 0.\tag{$*$}\]
By (\ref{eq:secondInteration}) $u$ is a smooth solution of the initial value problem with the desired bounds for all $t\in [0,T]$. Since $a_{0,N}(t) = a_0(0)$ for all $t\in [0,T]$ is constant, so is $a_0(t)$ of $u$:
\[\int_{\tset^n} u(x,t)~\diff x = a_0(t) = a_0(0) = \int_{\tset^n} u_0(x)~\diff x.\]

We now show that $u$ is $C^{r+1}$ in $t$. By (\ref{eq:firstInteration}) $\partial_t u_N$ is Riemann integrable for any fixed $x\in\tset^n$, by ($*$) all derivatives of $u_{N_j}$ converge uniformly on $[0,T]\times \tset^n$, and therefore we have
\begin{align*}
u(x,t) &= \lim_{j\to\infty} u_{N_j}(x,t)\\
&= \lim_{j\to\infty} \int_0^t \partial_t u_{N_j}(x,s)~\diff s\\
&\overset{(*)}{=} \int_0^t \lim_{j\to\infty} \partial_t u_{N_j}(x,s)~\diff s\\
&= \int_0^t \underbrace{\nu\Delta u(x,t) + \pset [v(x,s)\nabla u(x,s)]}_{\text{integrand (I)}}~\diff s.
\end{align*}
Since $u\in C([0,T],C^\infty(\tset^n)^n)$ the integrand (I) is $C^0$ in $t$. Hence, $u$ is the integral of a $C^0$-function, therefore $C^1$ in $t$, and (I) is $C^1$ in $t$. Proceeding this arguments shows that the integrand (I) is $C^r$ in $t$ since $v\in C^r$ and hence $u\in C^{r+1}([0,T],C^\infty(\tset^n)^n)$.

Since $T>0$ was arbitrary, we can set $T=1$ and from $[0,\infty) = \bigcup_{j\in\nset_0} [j,j+1]$ it follows that $u$ exists for all times $t\in [0,\infty)$.

The uniqueness follows from the standard $L^2$-norm estimate since $\divv v=0$: Let $u$ and $\tilde{u}$ be two smooth solutions with initial data $u_0$. Then
\begin{align*}
\langle u-\tilde{u},u-\tilde{u}\rangle_{L^2(\tset^n)} &\geq 0
\intertext{with}
\langle u-\tilde{u},u-\tilde{u}\rangle_{L^2(\tset^n)}\big|_{t=0} &= 0
\intertext{and}
\partial_t \langle u-\tilde{u},u-\tilde{u}\rangle_{L^2(\tset^n)} &= 2\langle u-\tilde{u}, \nu\Delta (u-\tilde{u}) + \pset [v\nabla (u-\tilde{u})]\rangle_{L^2(\tset^n)}\\
&= -2\nu\cdot \|\nabla (u-\tilde{u})\|_{L^2(\tset^n)}^2 + 2\langle u-\tilde{u},v\nabla (u-\tilde{u})\rangle_{L^2(\tset^n)}\\
&= -2\nu\cdot \|\nabla (u-\tilde{u})\|_{L^2(\tset^n)}^2\\
&\leq 0
\end{align*}
since with $w := u-\tilde{u}$ and $\divv v=0$ we have
\begin{multline*}
\langle w,v\nabla w\rangle_{L^2(\tset^n)} = \int_{\tset^n} \sum_{i,j=1}^n w_i\cdot v_j \cdot \partial_j w_i~\diff x
= \frac{1}{2}\int_{\tset^n} \sum_{i,j=1}^n v_j\cdot \partial_j w_i^2~\diff x\\
= -\frac{1}{2} \int_{\tset^n} \sum_{i,j=1}^n w_i^2\cdot \partial_j v_j~\diff x
= -\frac{1}{2} \int_{\tset^n} w^2\cdot \divv v~\diff x
= 0.
\end{multline*}
Hence, $\langle u-\tilde{u},u-\tilde{u}\rangle_{L^2(\tset^n)} = 0$ and therefore $u = \tilde{u}$ for all $t\in [0,\infty)$.
\end{proof}

Note, since $u$ is unique also the accumulation point of $\{u_N\}_{N\in\nset}$ is unique, i.e., $u_N$ converges to $u$ in $C^r([0,T],C^\infty(\tset^n)^n)$:
\[ \sup_{(x,t)\in \tset^n\times [0,T]} |\partial^\alpha u_N(x,t) - \partial^\alpha u(x,t)| \to 0\]
for all $\alpha\in\nset_0^n$ and $T>0$.

\section{Local existence and uniqueness for the periodic Navier--Stokes equation}
\label{sec:local}

For a better reading we split the proof of our main result in \Cref{thm:mainNS} into two parts. The first part is \Cref{lem:existence}. We use there \Cref{thm:main} with the bounds (\ref{eq:normBound}) - (\ref{eq:firstBoundzwei}) to show preliminary lower bounds on $T^*$ depending on $\|u_0\|_{\sA,0}$ and $\|u_0\|_{\sA,1}$. In \Cref{thm:mainNS} we then remove the dependency on $\|u_0\|_{\sA,1}$ and prove uniqueness.

\begin{lem}\label{lem:existence}
Let $n\in\nset$, $n\geq 2$, $\nu>0$ and $u_0\in C^\infty(\tset^n,\rset^n)$ with $\divv u_0=0$. There exists
\begin{equation*}
T^* \geq \left.\begin{cases}
\infty & \text{if}\ \|u_0\|_{\sA,1}=0,\\
\frac{2\nu}{\|u_0\|_{\sA,0}^2} & \text{if}\ \|u_0\|_{\sA,0}^2 \geq 4\cdot\nu\cdot\|u_0\|_{\sA,1},\\
\frac{1}{\|u_0\|_{\sA,1}} - \frac{\|u_0\|_{\sA,0}^2}{8\nu\cdot \|u_0\|_{\sA,1}^2} - \frac{3\nu}{2\|u_0\|_{\sA,0}^2} & \text{else}\end{cases}\right\} > 0.
\end{equation*}
such that the $n$-dimensional periodic Navier--Stokes equation has a smooth solution $u\in C^\infty([0,T^*),C^\infty(\tset^n)^n)$. $u$ fulfills the bounds (\ref{eq:normBound}) - (\ref{eq:firstBoundzwei}) with $u=-v$.

Additionally, the a priori bound $\|u(x,t)\|_{\sA,1} \leq C < \infty$ for all $t\in [0,T]$ for some $T>0$ implies $u\in C^\infty([0,T],C^\infty(\tset^n)^n)$.
\end{lem}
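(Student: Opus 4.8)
The plan is to realize the periodic Navier--Stokes equation as the fixed point $v=-u$ of the linear problem (\ref{eq:ens2}), for which \Cref{thm:main} already supplies, for every prescribed divergence-free $v$, a unique smooth solution together with the bounds (\ref{eq:normBound})--(\ref{eq:firstBoundzwei}); recall that $v=-u$ turns $\nu\Delta u+\pset[v\nabla u]$ into the projected Navier--Stokes right-hand side. First I would obtain a local-in-time smooth solution by the Banach fixed point theorem: writing $\Phi(v)$ for the solution produced by \Cref{thm:main}, I would show that $v\mapsto -\Phi(v)$ is a contraction on a small ball in $C([0,\tau],\{\,f:\divv f=0\,\})$ for $\tau>0$ small, using the bounds of \Cref{thm:main} for the self-mapping property and an $L^2$-type estimate on the difference $\Phi(v^1)-\Phi(v^2)$ for the contraction factor (the same computation with $\divv v=0$ that closes the uniqueness argument in \Cref{thm:main}). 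The fixed point $u=-v$ then solves the equation on $[0,\tau]$, and it is smooth because $\Phi$ lands in $C^\infty$; since $\pset_k a_k\perp k$ the iterates stay divergence free, so the scheme is consistent.

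Once a smooth solution exists on its maximal interval, the bounds of \Cref{thm:main} become \emph{self-referential} under $v=-u$, because $\|v\|_{\sA,d}=\|u\|_{\sA,d}$. For $g(t):=\|u(x,t)\|_{\sA,0}$ the estimate (\ref{eq:normBound}) reads $g(t)\le g(0)\exp\!\big(\tfrac1{4\nu}\int_0^t g(s)^2\,\diff s\big)$; setting $G(t):=\tfrac1{4\nu}\int_0^t g(s)^2\,\diff s$ converts this into the differential inequality $G'\le \tfrac{g(0)^2}{4\nu}e^{2G}$, whose integration yields $g(t)^2\le g(0)^2\big(1-\tfrac{g(0)^2}{2\nu}t\big)^{-1}$, finite precisely for $t<2\nu\,\|u_0\|_{\sA,0}^{-2}$. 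This already delivers the second case $T^*\ge 2\nu\,\|u_0\|_{\sA,0}^{-2}$.

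Feeding this control of $g$ into (\ref{eq:firstBound}) gives a closed inequality for $h(t):=\|u(x,t)\|_{\sA,1}$ of the form $h(t)\le h(0)\,\psi(t)\exp\!\big(\int_0^t h\big)$, with an explicit increasing factor $\psi$ coming from $\exp(\tfrac1{4\nu}\int_0^t g^2)$. The same device, $H(t):=\int_0^t h$ so that $H'e^{-H}\le h(0)\psi$, integrates to $e^{-H(t)}\ge 1-h(0)\int_0^t\psi$, which stays positive---hence $h$ finite---until $h(0)\int_0^t\psi=1$. Evaluating this threshold produces the three stated cases: $\|u_0\|_{\sA,1}=0$ forces $u$ to be $x$-independent and thus global ($T^*=\infty$); when $\|u_0\|_{\sA,0}^2\ge 4\nu\|u_0\|_{\sA,1}$ the $\sA,1$-constraint is inactive before the $\sA,0$ blow-up time and one recovers $2\nu\,\|u_0\|_{\sA,0}^{-2}$; otherwise one solves the threshold explicitly, the extra subtraction $-\tfrac{3\nu}{2\|u_0\|_{\sA,0}^2}$ being the price for keeping the $\psi$-estimate valid on the whole interval rather than up to its singularity. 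I expect the honest extraction of this third, ``else'' bound to be the main obstacle, since it couples the two integral inequalities while preserving dimension-independence.

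Finally, for the continuation statement I would bootstrap from $\|u\|_{\sA,1}\le C$ to smoothness. Since $g\le h$ by \Cref{lem:seminormProperties}(iii), the prefactor $\exp(\tfrac1{4\nu}\int_0^t g^2+d\int_0^t h)$ in (\ref{eq:higherBounds}) is bounded on $[0,T]$, and in the bracket the only top-order contribution is the $j=d$ term $\int_0^t\|u\|_{\sA,d}\cdot\sup\|u\|_{\sA,1}$; treating the remaining products by an induction on $d$ and applying Gronwall to $\|u\|_{\sA,d}$ shows that every $\|u\|_{\sA,d}$ stays finite on $[0,T]$. By \Cref{lem:seminormProperties}(i) all $C^\infty$-seminorms are then controlled, so $u\in C^\infty([0,T],C^\infty(\tset^n)^n)$ and the local solution extends; in particular the maximal existence time is at least the explicit $T^*$ above.
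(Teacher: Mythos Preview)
Your outline for the $T^*$ bounds and the $\|\cdot\|_{\sA,1}$ bootstrap matches the paper: one converts (\ref{eq:normBound}) into the worst-case ODE $f_0'=\tfrac1{4\nu}f_0^3$, getting $f_0(t)=\sqrt{2\nu/(2\nu\|u_0\|_{\sA,0}^{-2}-t)}$, and then plugs this into (\ref{eq:firstBound}) to obtain a first-order ODE for $f_1$ whose explicit solution $f_1(t)=\big(\sqrt{c-2t}\,(\gamma+\sqrt{c-2t})\big)^{-1}$ with $c=4\nu\|u_0\|_{\sA,0}^{-2}$ and $\gamma=\tfrac{\|u_0\|_{\sA,0}}{2\sqrt{\nu}\,\|u_0\|_{\sA,1}}-\tfrac{2\sqrt{\nu}}{\|u_0\|_{\sA,0}}$ produces the three cases via the sign of $\gamma$. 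So the part you flagged as ``the main obstacle'' is in fact a direct ODE computation; you should carry it out rather than leave it as an expectation.

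Where you genuinely diverge from the paper is in the \emph{construction} of the solution. The paper does not run a Banach fixed point on $v\mapsto -\Phi(v)$; instead it introduces the time-delayed family $u^{(\varepsilon)}$ solving $\partial_t u^{(\varepsilon)}=\nu\Delta u^{(\varepsilon)}-\pset[u^{(\varepsilon)}(\,\cdot\,,t-\varepsilon)\nabla u^{(\varepsilon)}]$ (built by iterating \Cref{thm:main} on intervals of length $\varepsilon$), derives uniform-in-$\varepsilon$ bounds on all $\|u^{(\varepsilon)}\|_{\sA,d}$ via the ODE comparison above, and then invokes the Montel-space Arzel\`a--Ascoli \Cref{lem:arzelaascoli} to extract a limit. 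This buys smoothness of the limit for free and avoids having to choose a norm in which $v\mapsto -\Phi(v)$ is simultaneously self-mapping and contractive. Your sketch has a gap precisely there: you propose self-mapping via the $\|\cdot\|_{\sA,d}$ bounds of \Cref{thm:main} but contraction via an $L^2$ estimate, and these live in different topologies; a Banach fixed point needs both in the \emph{same} complete metric. The paper does use a Banach fixed point, but only later (in \Cref{thm:mainNS}) and only for \emph{uniqueness}, in an $l^1$-type norm on Fourier coefficients after existence and the crucial bound $\sum_k\sup_t|a_k(t)|<\infty$ have already been secured by the compactness route.
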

\begin{proof}
For $\|u_0\|_{\sA,1} = 0$ we have that $u_0(x)$ is constant and hence $u(x,t) = u_0(x)$ is a smooth solution for all times $t\geq 0$. Therefore, we can assume w.l.o.g.\ $\|u_0\|_{\sA,1}>0$.

For $\varepsilon\in (0,1]$ let $u^{(\varepsilon)}$ be the unique time-delayed solution from \Cref{thm:main}, i.e., the unique solution of
\[\partial_t u^{(\varepsilon)}(x,t) = \nu\Delta u^{(\varepsilon)}(x,t) - \pset[u^{(\varepsilon)}(x,t-\varepsilon)\nabla u^{(\varepsilon)}(x,t)]\]
with $u^{(\varepsilon)}(x,t) = u_0(x)$ for all $t\in [-\varepsilon,0]$. This unique solution exists since for $t\in [0,\varepsilon]$ we have $v(x,t) = u^{(\varepsilon)}(x,t-\varepsilon) = u_0(x)$ in \Cref{thm:main}, i.e., $u^{(\varepsilon)}$ exists uniquely for $t\in [0,\varepsilon]$ and hence $v$ exists for $[0,2\varepsilon]$. Proceeding this argument we see that $u^{(\varepsilon)}$ exists uniquely for all $t\in [0,\infty)$ for any fixed $\varepsilon>0$.

We want to apply \Cref{lem:arzelaascoli} to the family $\{u^{(\varepsilon)}\}_{\varepsilon\in (0,1]}$. Hence, we want to determine the interval $I\subseteq [0,\infty)$ where we have
\[C_d(t) := \limsup_{\varepsilon\to 0} \|u^{(\varepsilon)}(x,t)\|_{\sA,d} <\infty\]
for $t\in I$ and all $d\in\nset_0$.

From \Cref{thm:main} (\ref{eq:normBound}) with $\|v(x,t)\|_{\sA,0} = \|u(x,t-\varepsilon)\|_{\sA,0}$ we find for $\varepsilon\to 0$
\[C_0(t) \leq \|u_0\|_{\sA,0}\cdot \exp\left(\frac{1}{4\nu} \int_0^t C_0(s)^2~\diff s \right).\]
In the worst case we have equality. Differentiating this equality gives
\begin{align}\label{eq:f0}
f_0(t) = f_0(0)\cdot \exp\left( \frac{1}{4\nu} \int_0^t f_0(s)^2~\diff s \right)\quad
&\overset{\frac{\diff}{\diff t}}{\Rightarrow}\quad f'_0(t) = \frac{1}{4\nu}\cdot f_0^3(t)\\
&\Rightarrow\quad C_0(t)\leq f_0(t) = \sqrt{\frac{2\nu}{2\nu\|u_0\|_{\sA,0}^{-2}- t}}\notag
\end{align}
with singularity at $T_0 := 2\nu\cdot \|u_0\|_{\sA,0}^{-2}>0$. For $d=1$ we find from the bound (\ref{eq:firstBound})
\begin{align*}
&f_1(t) = f_1(0)\cdot \exp\left(\int_0^t f_1(s)~\diff s + \frac{1}{4\nu} \int_0^t f_0(s)^2~\diff s\right)\\
\overset{\frac{\diff}{\diff t}}{\Rightarrow}\quad &f'_1(t) = f_1(t)\cdot \left(f_1(t) + \frac{f_0(t)^2}{4\nu}\right) = f_1(t)\cdot \left(f_1(t) + \frac{1}{4\nu\cdot \|u_0\|_{\sA,0}^{-2} - 2t} \right)\\
\Rightarrow\quad & C_1(t)\leq f_1(t) = \frac{1}{\gamma\cdot \sqrt{4\nu\cdot \|u_0\|_{\sA,0}^{-2}-2t} + (4\nu\cdot \|u_0\|_{\sA,0}^{-2} - 2t)}\\
&\phantom{C_1(t)\leq f_1(t)} =\frac{1}{\sqrt{c-2t}\cdot(\gamma + \sqrt{c-2t})}.
\end{align*}
with $\gamma := \frac{\|u_0\|_{\sA,0}}{2\cdot\sqrt{\nu}\cdot\|u_0\|_{\sA,1}} - \frac{2\sqrt{\nu}}{\|u_0\|_{\sA,0}}$ and $c = 4\nu\cdot \|u_0\|_{\sA,0}^{-2}$.

If $\gamma\geq 0$, then $f_1$ (and $f_0$) exists on $[0,T_0)$ with a singularity at $T_0$.

If $\gamma\in (-\sqrt{c},0)$, then $\gamma + \sqrt{c-2t} \geq 0$ for all $t\in [0,T_1]$ with $T_1:= c-\gamma^2 = 2\nu\|u_0\|_{\sA,0}^{-2} - \gamma^2 = \frac{1}{\|u_0\|_{\sA,1}} - \frac{\|u_0\|_{\sA,0}^2}{8\nu\cdot \|u_0\|_{\sA,1}^2} - \frac{3\nu}{2\|u_0\|_{\sA,0}^2}$ and $f_1$ has a singularity at $T_1\in (0,T_0)$. $f_0$ and $f_1$ exists both on $[0,T_1)$. Which proves the third case.

Let $[0,\tau]\subset [0,T)$ where $T$ is one of the three cases $\infty$, $T_0$, or $T_1$ we just calculated. Then $f_0(t)$ and $f_1(t)$ are continuous on $[0,\tau]$ and therefore bounded. $C_d(t)$ fulfills the bounds (\ref{eq:higherBounds}) for all $d\geq 2$. But (\ref{eq:higherBounds}) has the structure
\[C_d(t) \leq B_d(t) + \tilde{B}_d(t)\cdot \int_0^t C_d(s)~\diff s\]
where $B_d$ and $\tilde{B}_d$ are continuous non-decreasing functions depending only on $C_0$, $\dots$, $C_{d-1}$. Hence,
\[C_d(t) \leq B_d(\tau) + \tilde{B}_d(\tau)\cdot \int_0^t C_d(s)~\diff s\]
with equality in the worst cast. Differentiating the worst cast gives
\[f_d'(t) = \tilde{B}_d(\tau)\cdot f_d(t) \quad\Rightarrow\quad C_d(t) \leq f_d(t) = B_d(\tau)\cdot\exp\left(\tilde{B}_d(\tau)\cdot t\right),\]
i.e., all $C_d$ are bounded functions on $[0,\tau]\subset [0,T)$ with $T>0$.

Since $C_d(t)<\infty$ for all $d\in\nset$ and $t\in [0,\tau]$ the family $\{u^{(\varepsilon)}\}_{\varepsilon\in (0,1]}$ fulfills condition (i) of \Cref{lem:arzelaascoli}. But all $u^{(\varepsilon)}$ fulfill (\ref{eq:ens2}) with $v(x,t) = u^{(\varepsilon)}(x,t-\varepsilon)$, i.e.,
\[\sup_{s\in [0,\tau],\varepsilon\in(0,1]} \|\partial_t u^{(\varepsilon)}(x,s)\|_\infty <\infty,\]
and condition (ii) of \Cref{lem:arzelaascoli} is fulfilled. Therefore, the family $\{u^{(\varepsilon)}\}_{\varepsilon\in (0,1]}$ is relatively compact and has an accumulation point $u\in C([0,\tau],C^\infty(\tset^n)^n)$. Let $(\varepsilon_j)_{j\in\nset}\subset (0,1]$ with $\varepsilon_j\to 0$ as $j\to\infty$. Then
\[\sup_{t\in [0,\tau]} \|\partial^\alpha u^{(\varepsilon_j)}(x,t)- \partial^\alpha u(x,t)\|_\infty\xrightarrow{j\to\infty} 0 \tag{$*$}\]
for all $\alpha\in\nset_0^n$. Similarly as in the proof of \Cref{thm:main} we find that
\begin{align*}
u(x,t) &= \lim_{j\to\infty} u^{(\varepsilon_j)}(x,t)\\
&= \lim_{j\to\infty} \int_0^t \partial_t u^{(\varepsilon_j)}(x,s)~\diff s\\
&\overset{(*)}{=} \int_0^t \lim_{j\to\infty} \partial_t u^{(\varepsilon_j)}(x,s)~\diff s\\
&= \int_0^t \nu\Delta u(x,t) + \pset [u(x,s)\nabla u(x,s)]~\diff s
\end{align*}
implies that $u\in C^\infty([0,\tau],C^\infty(\tset^n)^n)$ solves the Navier--Stokes equation. But since $[0,\tau]\subset [0,T)\subseteq [0,T^*)$ was arbitrary we have a solution $u\in C^\infty([0,T^*),C^\infty(\tset^n)^n)$.

Since $u$ fulfills the bounds (\ref{eq:normBound}) - (\ref{eq:firstBoundzwei}) with $u=-v$ we find that as long as $\|u(x,t)\|_{\sA,1}$ is bounded, so are $\|u(x,t)\|_{\sA,k}$ for all $k\in\nset_0$.
\end{proof}

The preliminary lower bounds on $T^*$ in \Cref{lem:existence} depend on $\|u_0\|_{\sA,0}$ and $\|u_0\|_{\sA,1}$. We remove now the dependency on $\|u_0\|_{\sA,1}$ and prove uniqueness.

\begin{thm}\label{thm:mainNS}
Let $n\in\nset$ with $n\geq 2$, $\nu>0$, and $u_0\in C^\infty(\tset^n)^n$ with $\divv u_0=0$. There exists
\begin{equation}\label{eq:timeStar}
T^* \geq \begin{cases} \infty & \text{if}\quad \|u_0\|_{\sA,0}\leq \nu,\ \text{or}\\ \frac{2\nu}{\|u_0\|_{\sA,0}^2} & \text{if}\quad \|u_0\|_{\sA,0}>\nu\end{cases}
\end{equation}
such that the $n$-dimensional periodic Navier--Stokes equation has a unique smooth solution $u\in C^\infty([0,T^*),C^\infty(\tset^n)^n)$.

If the a priori bound $\|u(x,t)\|_{\sA,0}\leq C<\infty$ holds for all $t\in [0,T]$, then $T^* > T$.
\end{thm}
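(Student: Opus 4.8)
The plan is to take $T^{*}$ to be the maximal time of existence of the smooth solution produced by \Cref{lem:existence} (uniqueness, proved next, guarantees that the local solutions coming from \Cref{lem:existence} patch into a single maximal solution on $[0,T^{*})$), and to establish three things separately: uniqueness, a continuation criterion phrased in the \emph{weak} norm $\|\cdot\|_{\sA,0}$, and the two explicit lower bounds on $T^{*}$. I would first dispose of uniqueness by the $L^{2}$-energy method used already in the proof of \Cref{thm:main}, now applied to the nonlinear difference: if $u,\tilde u$ are two smooth solutions with datum $u_{0}$ and $w:=u-\tilde u$, then $\divv w=0$, $w(\,\cdot\,,0)=0$, and $\partial_t w=\nu\Delta w-\pset[u\nabla w+w\nabla\tilde u]$. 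Since $\divv w=0$ gives $\langle w,\pset f\rangle_{L^{2}}=\langle w,f\rangle_{L^{2}}$ and $\langle w,u\nabla w\rangle_{L^{2}}=0$ by the same integration by parts as in \Cref{thm:main} (using $\divv u=0$), one gets $\tfrac12\partial_t\|w\|_{L^{2}}^{2}=-\nu\|\nabla w\|_{L^{2}}^{2}-\langle w,w\nabla\tilde u\rangle_{L^{2}}\le c_{n}\,\|\nabla\tilde u\|_{\infty}\,\|w\|_{L^{2}}^{2}$. As $\tilde u$ is smooth, $\|\nabla\tilde u(\,\cdot\,,t)\|_{\infty}\le\|\tilde u(\,\cdot\,,t)\|_{\sA,1}$ by \Cref{lem:seminormProperties}(i) is locally bounded in $t$, so Gronwall together with $w(\,\cdot\,,0)=0$ forces $w\equiv 0$.

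The heart of the theorem is the final a priori statement, i.e.\ the continuation criterion: a bound $\|u(\,\cdot\,,t)\|_{\sA,0}\le C$ on $[0,T]$ must force $T^{*}>T$. Note that \Cref{lem:existence} already supplies such a criterion in the \emph{stronger} norm $\|\cdot\|_{\sA,1}$, so it suffices to upgrade control of $\|\cdot\|_{\sA,0}$ to control of $\|\cdot\|_{\sA,1}$. Here the parabolic smoothing of $\nu\Delta$ must be exploited, and this is the main obstacle: the bound (\ref{eq:firstBound}) is useless for this purpose, since with $v=-u$ its exponent contains $\int_{0}^{t}\|u\|_{\sA,1}$, which closes into a Riccati inequality that can blow up in finite time. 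Instead I would pass to the variation-of-constants (Duhamel) form of (\ref{eq:ak}) with $b=-a$, namely $a_{k}(t)=e^{-\nu k^{2}t}a_{k}(0)-i\int_{0}^{t}e^{-\nu k^{2}(t-s)}\sum_{l}\langle a_{k-l}(s),l\rangle\,\pset_{k}a_{l}(s)\,\diff s$, and sum $\sum_{k}|k|\,|a_{k}(t)|$. Using $|\pset_{k}|\le 1$, $|\langle a_{k-l},l\rangle|\le|a_{k-l}|\,|l|$, and the elementary smoothing estimate $\sup_{x>0}x\,e^{-\nu x^{2}\tau}=(2e\nu\tau)^{-1/2}$ applied to $|k|\,e^{-\nu k^{2}(t-s)}$ \emph{before} summing in $k$, the convolution factorises into $\|u(\,\cdot\,,s)\|_{\sA,0}\,\|u(\,\cdot\,,s)\|_{\sA,1}$ and yields, on $[0,T^{*})$,
\begin{equation*}
\|u(\,\cdot\,,t)\|_{\sA,1}\le \|u_{0}\|_{\sA,1}+\frac{C}{\sqrt{2e\nu}}\int_{0}^{t}\frac{\|u(\,\cdot\,,s)\|_{\sA,1}}{\sqrt{t-s}}\,\diff s
\end{equation*}
(the linear term needs no smoothing, as $\|u_{0}\|_{\sA,1}<\infty$ for smooth data). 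The kernel $(t-s)^{-1/2}$ is integrable, so the singular Gronwall (Henry) lemma bounds $\|u(\,\cdot\,,t)\|_{\sA,1}$ on every bounded time interval. Assuming $T^{*}\le T$ for contradiction, this bounds $\|u(\,\cdot\,,\cdot)\|_{\sA,1}$ uniformly on $[0,T^{*})$; feeding that into the $\|\cdot\|_{\sA,1}$-continuation criterion of \Cref{lem:existence} extends the solution past $T^{*}$, contradicting maximality. Hence $T^{*}>T$.

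With the continuation criterion in hand, both lower bounds reduce to a priori control of $\|u\|_{\sA,0}$ alone. For the general case I would substitute $v=-u$ into (\ref{eq:normBound}), giving $g(t):=\|u(\,\cdot\,,t)\|_{\sA,0}\le\|u_{0}\|_{\sA,0}\exp\!\big(\tfrac1{4\nu}\int_{0}^{t}g^{2}\big)$; a Bihari/comparison argument against the solution $f_{0}$ of $f_{0}'=\tfrac1{4\nu}f_{0}^{3}$, $f_{0}(0)=\|u_{0}\|_{\sA,0}$ — exactly the $f_{0}$ computed in \Cref{lem:existence}, with singularity at $T_{0}:=2\nu\|u_{0}\|_{\sA,0}^{-2}$ — shows $g\le f_{0}<\infty$ on every $[0,\tau]\subset[0,T_{0})$, whence $T^{*}\ge 2\nu\|u_{0}\|_{\sA,0}^{-2}$ by the continuation criterion. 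For the smallness case $\|u_{0}\|_{\sA,0}\le\nu$ I would run a continuity (bootstrap) argument on (\ref{eq:normBoundzwei}): with $\delta=\nu-\|u_{0}\|_{\sA,0}\ge 0$, as long as $g\le\|u_{0}\|_{\sA,0}$ on $[0,t]$ the hypothesis $\|v\|_{\sA,0}+\delta=g+\delta\le\nu$ of (\ref{eq:normBoundzwei}) holds, so $g(t)\le\|u_{0}\|_{\sA,0}e^{-\delta t}\le\|u_{0}\|_{\sA,0}$; since this makes $g$ non-increasing it cannot cross the threshold, so the set $\{t:g\le\|u_{0}\|_{\sA,0}\text{ on }[0,t]\}$ is open and closed in $[0,T^{*})$ and hence equals it. Thus $g\le\nu$ throughout $[0,T^{*})$, and the continuation criterion forces $T^{*}=\infty$. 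Finally, the displayed a priori statement of the theorem is precisely the continuation criterion established in the second paragraph.
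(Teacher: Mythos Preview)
Your argument is correct, but it diverges from the paper's proof in two places worth noting.

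For uniqueness, the paper does \emph{not} use the $L^{2}$-energy estimate you propose; instead it sets up a Banach fixed-point argument on the closed convex set
\[
M=\Bigl\{f=\textstyle\sum_{k}f_{k}(t)e^{ikx}:\ f_{k}\in C([0,T],\cset^{n}),\ \sum_{k}\max_{t}|f_{k}(t)|\le 2\|u_{0}\|_{\sA}\Bigr\}
\]
with the $l^{1}$-type norm $\|f\|_{M}=\sum_{k}\max_{t}|f_{k}(t)|$, showing that the Duhamel map $S$ is a $\tfrac45$-contraction on $M$ once $T$ is small enough. Your energy/Gronwall route is shorter and entirely standard; the paper's choice keeps everything inside the $\|\cdot\|_{\sA}$ framework and avoids any auxiliary norm.

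For the key upgrade ``$\|u\|_{\sA,0}$ bounded $\Rightarrow$ $\|u\|_{\sA,1}$ bounded'', the paper avoids the singular Gronwall lemma altogether. It first argues, by revisiting the splitting construction behind (\ref{eq:normBound}), that the a priori bound $\|u\|_{\sA,0}\le C$ in fact yields \emph{pointwise-in-$k$} bounds $|a_{k}(t)|\le A_{k}$ with $\sum_{k}A_{k}<\infty$. Then in Duhamel it uses the divergence-free identity $\langle a_{k-l},l\rangle=\langle a_{k-l},k\rangle$ to pull out a factor $|k|$ (rather than your $|l|$), so that the heat integral produces $|k|^{2}\cdot(\nu k^{2})^{-1}=\nu^{-1}$ exactly, giving the closed-form bound
\[
\|u(\,\cdot\,,t)\|_{\sA,1}\ \le\ \nu^{-1}\Bigl(\sum_{k}A_{k}\Bigr)^{2}+\|u_{0}\|_{\sA,1}\,e^{-\nu t}
\]
with no Gronwall step and no implicit constants. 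Your smoothing estimate $|k|e^{-\nu k^{2}\tau}\le(2e\nu\tau)^{-1/2}$ followed by Henry's lemma is the textbook semigroup approach and is perfectly valid; it has the advantage of being self-contained (you never reopen the proof of \Cref{thm:main}), at the cost of introducing the singular-Gronwall constant, which runs slightly against the paper's theme of explicit, dimension-free bounds. The remaining parts --- the comparison with $f_{0}'=\tfrac1{4\nu}f_{0}^{3}$ and the bootstrap on (\ref{eq:normBoundzwei}) for small data --- match the paper's argument.
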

\begin{proof}
At first we prove the second statement, i.e., the a priori bound $\|u_0(x,t)\|_{\sA,0} \leq C < \infty$ for all $t\in [0,T]$ implies a smooth solution for all $t\in [0,T]$.

By \Cref{lem:existence} there exists a time $T^*\in [0,\infty)$ such that a smooth solution $u(x,t)$ exists for all $t\in [0,T^*)$. Set $v(x,t) = -u(x,t)$ in \Cref{thm:main} and let $T\in [0,T^*)$. Then for $N\to\infty$ with $\Delta\cZ_N\to 0$ a subsequence of the approximate solutions $u_N$ in (\ref{eq:firstInteration}) converges to $u$ with respect to $\|\cdot\|_{\sA,d}$ for all $d\in\nset_0$ and $t\in [0,T]$. We show that for all $k\in\zset^n$ there are constants $A_k>0$ such that $|a_k(t)|\leq A_k$ and $\sum_{k\in\zset^n} A_k < \infty$. We do so by following the proof of the bound (\ref{eq:normBound}) from \Cref{thm:main}, i.e., bounding $\|u_N(x,t_N)\|_{\sA,0}$ by the calculation
\begin{align*}
&\phantom{=}\;\;\|u_N(x,t_N)\|_{\sA,0}\\
&= \sum_{k\in\zset^n\setminus\{0\}} |a_{k,N}(t_N)|\\
&\leq \sum_{k\in\zset^n\setminus\{0\}} |a_{k,N}(t_{N-1})|\cdot e^{-\nu\cdot k^2\cdot (t_N-t_{N-1})} \tag{$*$}\\
&\quad + \sum_{k,l\in\zset^n\setminus\{0\}} \int_{t_{N-1}}^{t_N} \left|\langle b_l(s), k\rangle\right|~\diff s\cdot |a_{k,N}(t_{N-1})|\cdot e^{-\nu\cdot k^2\cdot (t_N-t_{N-1})}\\
&= \sum_{k\in\zset^n\setminus\{0\}} |a_{k,N}(t_{N-1})|\cdot e^{-\nu\cdot k^2\cdot (t_N-t_{N-1})}\cdot \left[ 1 + \sum_{l\in\zset^n\setminus\{0\}} \int_{t_{N-1}}^{t_N} |\langle b_l(s), k\rangle|~\diff s \right]\\
&\leq \sum_{k\in\zset^n\setminus\{0\}}\!\!\!\! |a_{k,N}(t_{N-1})|\cdot \exp\left(\!|k|\int_{t_{N-1}}^{t_N} \|v(x,s)\|_{\sA,0}~\diff s - \nu k^2 (t_N-t_{N-1})\! \right)\\
&\leq \exp\left(\frac{(t_N-t_{N-1})\max\limits_{s\in [t_{N-1},t_N]} \|v(x,s)\|_{\sA,0}^2}{4\nu} \right)\cdot \sum_{k\in\zset^n\setminus\{0\}} |a_{k,N}(t_{N-1})|\\
&\ \;\vdots\\
&\leq \exp\left(\frac{1}{4\nu}\sum_{j=1}^N (t_j-t_{j-1})\max\limits_{s\in [t_{j-1},t_j]} \|v(x,s)\|_{\sA,0}^2\right)\cdot \sum_{k\in\zset^n\setminus\{0\}} |a_{k,N}(t_0)|\\
&\xrightarrow{N\to\infty\ \text{with}\ \Delta\cZ_N\to 0} \|u_0(x)\|_{\sA,0}\cdot \exp\left(\frac{1}{4\nu} \int_0^{T} \|v(x,s)\|_{\sA,0}^2~\diff s\right)
\end{align*}
But in each step ($*$) we sum over the absolute values of the Fourier coefficients in (\ref{eq:firstInteration}) separately and by rearranging (\ref{eq:firstInteration}) to (\ref{eq:splitDrei}) also the absolute values of the Fourier coefficients are rearranged without changing the bound on $\|u_N(x,t_N)\|_{\sA,0}$ and $\|u(x,t)\|_{\sA,0}$. Hence, for each $a_k$ we get
\[|a_k(t)|\leq A_k \quad\text{with}\quad \sum_{k\in\zset^n} A_k = C <\infty. \tag{\&}\]
for all $t\in [0,T]$. Note that $a_0(t)=a_0(0)$ is constant and therefore
\[\sum_{k\in\zset^n\setminus\{0\}} \max_{t\in [0,T]} |a_k(t)| \leq \sum_{k\in\zset^n\setminus\{0\}}\!\!\! A_k \leq \|u_0(x)\|_{\sA,0}\cdot \exp\left(\frac{1}{4\nu} \int_0^{T} \|v(x,s)\|_{\sA,0}^2~\diff s\right).\]

We now show that the $A_k$'s imply that also $\|u(x,t)\|_{\sA,1}$ is bounded. The $a_k$ fulfill the coefficient formulation (\ref{eq:ak}) of the Navier--Stokes equation with $b_k = -a_k$. For each $a_k$ it is an inhomogeneous first order ODE $f'(t) = c\cdot f(t) + g(t)$ solved by
\[f(t) = \int_0^t g(s)\cdot e^{-c\cdot s}~\diff s\cdot e^{c\cdot t} + f(0)\cdot e^{c\cdot t}.\]
Hence, (\ref{eq:ak}) is solved by
\[a_k(t) = i\cdot \int_0^t \sum_{l\in\zset^n} \langle a_{k-l}(t),k\rangle \cdot \pset_k a_l(t)\cdot e^{\nu\cdot k^2\cdot s}~\diff s\cdot e^{-\nu\cdot k^2\cdot t} + a_k(0)\cdot e^{-\nu\cdot k^2\cdot t} \tag{$+$}\]
which implies the bound
\begin{align*}
\|u(x,t)\|_{\sA,1} &= \sum_{k\in\zset^n} |k|\cdot |a_k(t)|\\
&\overset{(+)}{\leq} \sum_{k\in\zset^n} |k|\cdot \left| \int_0^t \sum_{l\in\zset^n} \langle a_{k-l}(t),k\rangle \cdot \pset_k a_l(t)\cdot e^{\nu\cdot k^2\cdot s}~\diff s\cdot e^{-\nu\cdot k^2\cdot t} \right|\\
&\quad + \sum_{k\in\zset^n} |k|\cdot |a_k(0)|\cdot e^{-\nu\cdot k^2\cdot t}\\
&\leq \sum_{k\in\zset^n} |k|\cdot \int_0^t \sum_{l\in\zset^n\setminus\{0\}} |a_{k-l}(s)|\cdot |k|\cdot |a_l(s)|\cdot e^{\nu\cdot k^2\cdot s}~\diff s\cdot e^{-\nu\cdot k^2\cdot t}\\
&\quad + \sum_{k\in\zset^n} |k|\cdot |a_k(0)|\cdot e^{-\nu\cdot k^2\cdot t} \tag{\#}\\
&\leq \sum_{l,k\in\zset^n\setminus\{0\}}\!\!\!\!\! \nu^{-1}\cdot A_{k-l}\cdot A_l\cdot \left[ 1 - e^{-\nu\cdot k^2\cdot t}\right] + \sum_{k\in\zset^n} |k|\cdot |a_k(0)|\cdot e^{-\nu\cdot k^2\cdot t}\\
&\leq \nu^{-1}\cdot \left( \sum_{k\in\zset^n\setminus\{0\}} A_k\right)^2 + \|u_0\|_{\sA,1}\cdot e^{-\nu\cdot t}\\
&\leq \nu^{-1}\cdot \|u_0(x)\|_{\sA,0}^2\cdot \exp\left(\frac{1}{2\nu} \int_0^{t} \|u(x,s)\|_{\sA,0}^2~\diff s\right) + \|u_0\|_{\sA,1}\cdot e^{-\nu\cdot t}.
\end{align*}
Therefore, as long as $\|u(x,t)\|_{\sA,0}$ is bounded, so is $\|u(x,t)\|_{\sA,1}$. If for all $t\in [0,T]$ we have the a priori bound $\|u(x,t)\|_{\sA,0} \leq C < \infty$, then $\|u(x,t)\|_{\sA,1}$ is bounded for all $t\in [0,T]$ and by \Cref{lem:existence} the smooth solution exists for all $t\in [0,T]$.

To prove the bounds of $T^*$ in (\ref{eq:timeStar}) we now only have to bound $\|u(x,t)\|_{\sA,0}$. But as in the proof of \Cref{lem:existence} we have for $\|u_0\|_{\sA,0} + \delta\leq \nu$ with $\delta\geq 0$ the a priori bound $\|u(x,t)\|_{\sA,0} \leq \|u_0\|_{\sA,0}\cdot e^{-\delta\cdot t}$ for all $t\in [0,\infty)$, i.e., $T^* = \infty$. And if $\|u_0\|_{\sA,0}>\nu$ then we only have (\ref{eq:normBound}) with $v(x,t) = -u(x,t)$, i.e.,
\[\|u(x,t)\|_{\sA,0} \leq \|u_0\|_{\sA,0}\cdot \exp\left(\frac{1}{4\nu}\int_0^t \|u(x,s)\|_{\sA,0}^2~\diff s\right),\]
which ensured in (\ref{eq:f0}) that $\|u(x,t)\|_{\sA,0}$ stays finite at least for $T^* \geq 2\nu\cdot \|u_0\|_{\sA,0}^{-2}$.

Uniqueness follows from the Banach fixed point theorem since we only need to control $\|u(\,\cdot\,,t)\|_{\sA,0}$ resp.\ $\|u(\,\cdot\,,t)\|_\sA = \|u(\,\cdot\,,t)\|_{\sA,0} + |a_0(0)|$ and can therefore work on a space similar to $l^1$. Let $L := \frac{1}{5\cdot \|u_0\|_{\sA}}$. There exists a $T\in (0,T^*)$ such that
\[\frac{1}{\nu\cdot |k|}\left( 1 - e^{-\nu k^2\cdot t}\right) \leq L\]
for all $k\in\nset_0^n\setminus\{0\}$ and $t\in [0,T]$ and
\[M := \left\{ \sum_{k\in\nset_0^n} f_k(t)\cdot e^{i\cdot k\cdot x} \,\middle|\, f_k\in C([0,T],\cset^n),\ \sum_{k\in\nset_0^n} \max_{t\in [0,T]} |f_k(t)| \leq 2\|u_0\|_{\sA} \right\}\]
is a closed bounded convex set of a Banach space with the norm
\[\|f\|_M := \sum_{k\in\nset_0^n} \max_{t\in [0,T]} |f_k(t)|\]
for $f(x,t) = \sum_{k\in\nset_0^n} f_k(t)\cdot e^{i\cdot k\cdot x}$. With $u_0(x) = \sum_{k\in\nset_0^n} a_k(0)\cdot e^{i\cdot k\cdot x}$ we define
\[(S f)(x,t) := \sum_{k,l\in\nset_0^n} \int_0^t e^{\nu\cdot |k|^2\cdot (s-t)}\cdot \langle f_{k-l}(s),k\rangle\cdot \pset_k f_l(s)~\diff s\cdot e^{i\cdot k\cdot x} + a_k(0)\cdot e^{-\nu\cdot k^2\cdot t},\]
i.e., by (\&) $u$ is in $M$ and it is a fixed point of $S$. Let $f\in M$, then
\begin{align*}
\|S f\|_M &\leq  \left\| \sum_{k,l\in\nset_0} \int_0^t e^{\nu\cdot k^2\cdot (s-t)}\cdot \langle f_{l-k}(s),k\rangle\cdot\pset_k f_l(s)~\diff s\cdot e^{i\cdot k\cdot x}\right\|_M + \|u_0\|_\sA\\
&\leq \sum_{k,l\in\nset_0^n, k\neq 0} \frac{1}{\nu\cdot |k|}\cdot \left( 1 - e^{-\nu\cdot |k|^2\cdot t}\right)\cdot \max_{t\in [0,T]} |f_{k-l}(t)|\cdot \max_{t\in [0,T]} |f_l(t)| + \|u_0\|_\sA\\
&\leq \frac{4\|f\|_M^2}{5\cdot \|u_0\|_\sA} + \|u_0\|_\sA\\
&\leq 2\|u_0\|_\sA,
\end{align*}
i.e., $Sf\in M$ and therefore $S:M\to M$. We now show that $S: M\to M$ is a contraction. Let $f,g\in M$ with Fourier coefficients $f_k$ and $g_k$, then
\begin{align*}
\|Sf - Sg \|_M &= \bigg\| \sum_{k,l\in\nset_0^n} \int_0^t e^{\nu\cdot k^2\cdot (s-t)}\cdot [\langle f_{k-l}(s),k\rangle\cdot \pset_k f_l(s)\\
&\qquad\qquad\qquad\qquad\qquad - \langle g_{k-l}(s),k\rangle\cdot \pset_k g_l(s)] ~\diff s\cdot e^{i\cdot k\cdot x} \bigg\|_M\\
&= \bigg\| \sum_{k,l\in\nset_0^n} \int_0^t e^{\nu\cdot k^2\cdot (s-t)}\cdot [\langle f_{k-l}(s)-g_{k-l}(s),k\rangle\cdot \pset_k f_l(s)\\
&\qquad\qquad\qquad\qquad\qquad - \langle g_{k-l}(s),k\rangle\cdot \pset_k (g_l(s)-f_l(s))] ~\diff s\cdot e^{i\cdot k\cdot x} \bigg\|_M\\
&\leq L\cdot (\|f\|_M + \|g\|_M)\cdot \|f-g\|_M\\
&\leq \frac{4}{5}\cdot \|f-g\|_M.
\end{align*}
Hence, $S:M\to M$ is a contraction on the closed, bounded, and convex subset $M$ of a Banach space and the Banach fixed point theorem shows that $S$ has a unique fixed point in $M$. Since $u\in M$ is a fixed point, it is therefore unique for all $t\in [0,T]$. By induction we find that $u$ is unique on any $[0,T]\subset [0,T^*)$, i.e., it is unique for all $t\in [0,T^*)$.
\end{proof}

Note, that the construction of a solution $u$ of the Navier--Stokes equation with its bounds on the semi-norms $\|\cdot\|_{\sA,d}$ is proved by a splitting algorithm in \Cref{thm:main} and \Cref{lem:existence}. We found in \Cref{thm:mainNS} that its existence is controlled by $\|u(\,\cdot\,,t)\|_{\sA,0}$. Hence, in the uniqueness proof of $u$ it is sufficient to control only $\|u(\,\cdot\,,t)\|_\sA = \|u(\,\cdot\,,t)\|_{\sA,0} + |a_0(0)|$, i.e., we can work in a $l^1$-like Banach space and can use the Banach fixed point theorem. We do not need to control any derivatives and do not need to use fixed point theorems on Montel (or Fr\'echet) spaces.

That existence and uniqueness is solely controlled by $\|u(\,\cdot\,,t)\|_{\sA,0}$ provides us with a similar finite break down criteria ($T^*$ finite) as by Hiroshi Fujita and Tosio Kato \cite{fujita64}
\[\sup_{t\in (0,T^*)} \|u(\,\cdot\,,t)\|_{H^1} = \infty.\]
or by J.\ Thomas Beale, Tosio Kato, and Andrew J.\ Majda \cite{beale84}:
\[\lim_{t\nearrow T^*} \|\rot u(\,\cdot\,,t)\|_\infty = \infty.\]

\begin{cor}
Let $n\in\nset$, $n\geq 2$, $T^*\in (0,\infty)$, and $u_0\in C^\infty(\tset^n)$ with $\divv u_0 =0$. Then
\begin{enumerate}[(i)]
\item the unique smooth solution $u$ of the $n$-dimensional periodic Navier--Stokes equation exists for all $t\in [0,T^*)$ but can not be extended beyond $T^*$
\end{enumerate}
if and only if
\begin{enumerate}[(i)]\setcounter{enumi}{1}
\item $\lim_{t\nearrow T^*} \|u(\,\cdot\,,t)\|_{\sA,0} = \infty$.
\end{enumerate}
\end{cor}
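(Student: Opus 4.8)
The plan is to read the corollary as a continuation criterion and to exploit that the lower bound on the lifespan in \Cref{thm:mainNS} depends only on $\|\cdot\|_{\sA,0}$. Write $T_{\max}\in(0,\infty]$ for the maximal existence time of the unique smooth solution produced by \Cref{thm:mainNS}; statement (i) is then precisely the assertion $T_{\max}=T^*$. I would first record two routine facts. First, every $f\in C^\infty(\tset^n,\rset^n)$ has absolutely summable Fourier coefficients, so $\|f\|_{\sA,0}<\infty$; this also follows from \Cref{lem:seminormProperties}(vi). Second, on any interval where $u$ is smooth the map $t\mapsto\|u(\,\cdot\,,t)\|_{\sA,0}$ is continuous, because by \Cref{lem:seminormProperties}(vi) and (i) it is dominated by a Sobolev norm and hence by finitely many of the semi-norms $\|\partial^\alpha\cdot\|_\infty$, in which $u$ varies continuously as a $C^\infty(\tset^n)^n$-valued function.

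For the implication (ii)$\Rightarrow$(i) I would argue by contradiction. Since (ii) takes a limit as $t\nearrow T^*$, the solution is defined to the left of $T^*$, so $T_{\max}\geq T^*$. If $T_{\max}>T^*$, then $T^*$ is an interior point of $[0,T_{\max})$, whence $u(\,\cdot\,,T^*)\in C^\infty(\tset^n)^n$ and $\|u(\,\cdot\,,T^*)\|_{\sA,0}<\infty$. By the continuity noted above, $\lim_{t\nearrow T^*}\|u(\,\cdot\,,t)\|_{\sA,0}=\|u(\,\cdot\,,T^*)\|_{\sA,0}<\infty$, contradicting (ii). Hence $T_{\max}=T^*$, which is exactly (i).

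The converse (i)$\Rightarrow$(ii) is the substantive direction. Assume $T_{\max}=T^*<\infty$ and suppose, for contradiction, that $\liminf_{t\nearrow T^*}\|u(\,\cdot\,,t)\|_{\sA,0}=:L<\infty$. Then there is a sequence $t_j\nearrow T^*$ with $\|u(\,\cdot\,,t_j)\|_{\sA,0}\leq L+1$ for all large $j$. For each such $j$ I would restart the Navier--Stokes flow at time $t_j$ with the smooth, divergence-free datum $u(\,\cdot\,,t_j)$ and apply \Cref{thm:mainNS}: the resulting solution exists for an additional time at least $2\nu/(L+1)^2=:\tau_0>0$ (or globally, if $\|u(\,\cdot\,,t_j)\|_{\sA,0}\leq\nu$), a bound that is \emph{uniform} in $j$ precisely because it depends on $\|\cdot\|_{\sA,0}$ alone and not on any derivatives. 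Choosing $j$ so large that $T^*-t_j<\tau_0$ and invoking the uniqueness part of \Cref{thm:mainNS} to glue, I obtain a smooth solution on $[0,t_j+\tau_0)\supsetneq[0,T^*)$, i.e.\ $T_{\max}>T^*$, a contradiction. Therefore $\liminf_{t\nearrow T^*}\|u(\,\cdot\,,t)\|_{\sA,0}=\infty$, which is (ii).

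The main obstacle is the uniformity of the restart lifespan used in the converse: the whole argument rests on the fact---established in \Cref{thm:mainNS} and novel to this paper---that the guaranteed time gained after a restart is controlled by $\|\cdot\|_{\sA,0}$ alone. If the lifespan instead depended on higher semi-norms $\|u(\,\cdot\,,t_j)\|_{\sA,d}$ (which could a priori grow faster), the increments $\tau_0$ could shrink to zero and the extension step would collapse. Everything else---finiteness and continuity of $\|u(\,\cdot\,,t)\|_{\sA,0}$ at smooth times---is routine given \Cref{lem:seminormProperties}.
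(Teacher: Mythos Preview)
Your argument is correct. The paper states this corollary without proof, treating it as an immediate consequence of \Cref{thm:mainNS}; your write-up supplies precisely the standard derivation the paper leaves implicit, and the crucial step---restarting at a time $t_j$ along a sequence where $\|u(\,\cdot\,,t_j)\|_{\sA,0}$ stays bounded and invoking the \emph{uniform} lifespan bound $T^*\geq 2\nu/\|u(\,\cdot\,,t_j)\|_{\sA,0}^2$ from (\ref{eq:timeStar})---is exactly the mechanism the paper has set up.

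One small remark on presentation: for the direction (i)$\Rightarrow$(ii) you could alternatively appeal directly to the a priori clause of \Cref{thm:mainNS} (``$\|u(\,\cdot\,,t)\|_{\sA,0}\leq C$ on $[0,T]$ implies $T^*>T$''), but as you implicitly recognise, that contrapositive only yields $\limsup_{t\nearrow T^*}\|u(\,\cdot\,,t)\|_{\sA,0}=\infty$; the restart argument is what upgrades this to $\liminf=\infty$ and hence to the genuine limit stated in (ii). Your identification of this point as the ``main obstacle'' is apt.
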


Since $a_0(t)$ is constant, (ii) is of course equivalent to $\lim_{t\nearrow T} \|u(\,\cdot\,,t)\|_\sA = \infty$. More known conditions for a finite break down can be found e.g.\ in \cite{wahl85} ,\cite{temam01}, \cite{majda02}, \cite{bahour11}, \cite{lemari13}, and references therein.

\begin{rem}
For $n=2$ the Navier--Stokes problem is solved \cite{leray33}. In \cite[Lem.\ 9.1.6]{kreiss89} it is nicely shown that $\|\rot u(\,\cdot\,,t)\|_{H^1} \leq C$ holds for all $t\geq 0$. With $u(x,t) = (u_1(x_1,x_2,t),u_2(x_1,x_2,t),0)^t$ and $\rot u(x,t) = (0,0,\omega_3(x_1,x_2,t))^t$ we get from \Cref{lem:seminormProperties}(vi)
\begin{align*}
\|u(\,\cdot\,,t)\|_{\sA,0} &\leq \underbrace{\sqrt{\sum_{k\in\zset^2\times\{0\},k\neq 0} |k|^{-4}}}_{<\infty} \cdot \sqrt{\sum_{k\in\zset^2\times\{0\},k\neq 0} |k|^{4}\cdot |u_k|^2 }\\
&\leq \sqrt{\sum_{k\in\zset^2\times\{0\},k\neq 0} |k|^{-4}} \cdot \sqrt{\sum_{k\in\zset^2\times\{0\},k\neq 0} |k|^{2}\cdot |k\times u_k|^2 }\\
&\leq K\cdot \|\rot u(\,\cdot\,,t)\|_{H^1}\\
&\leq K\cdot C < \infty
\end{align*}
for all $t\geq 0$. Hence, the a priori bound in \Cref{thm:mainNS} is fulfilled for all $t\geq 0$ and the $2$-dimensional periodic Naiver--Stokes equation has a unique smooth solution $u$ for all $t\geq 0$. In summary, \Cref{lem:seminormProperties}(vi) allows to apply $H^s$-bounds to $\|\cdot\|_{\sA,d}$.\exmsymbol
\end{rem}

\section{Examples for the growth of the Fourier coefficients}
\label{sec:examples}

We have seen in our main result \Cref{thm:mainNS} that the $l^1$-norm $\|u(x,t)\|_{\sA,0}$ of the Fourier coefficients $a_k(t)$ of a solution $u$ of the Naiver--Stokes equation solely determines existence, smoothness, and uniqueness. We therefore want to understand the time development of the Fourier coefficients in more detail. We present here three simple but explicit examples of solutions of (\ref{eq:ens2}).

The first two examples (\Cref{exm:simpleBreakDown} and \ref{exm:noBreakDown}) can be seen as linear versions of Burgers' equation with and without viscosity. The third example (\Cref{exm:besselConnection}) is without viscosity but for dimensions $n\geq 3$. It will reveal the connection of the time dependent Fourier coefficients $a_k(t)$ to the Bessel functions.

For the first two examples let $\nu\geq 0$ and let us look at the initial value problem
\begin{equation}\label{eq:initialOneDim}
\begin{split}
\begin{pmatrix} \partial_t f_1(x,t)\\ \partial_t f_2(x,t)\end{pmatrix} &= \nu \begin{pmatrix}
\partial_x^2 f_1(x,t)\\ \partial_x^2 f_2(x,t)\end{pmatrix} + \begin{pmatrix}\cos x & -\sin x\\ \sin x & \cos x\end{pmatrix} \cdot \begin{pmatrix} \partial_x f_1(x,t) \\ \partial_x f_2(x,t)\end{pmatrix}\\
\begin{pmatrix} f_1(x,0) \\ f_2(x,0)\end{pmatrix} &= \begin{pmatrix} \cos x\\ \sin x\end{pmatrix}
\end{split}
\end{equation}
on $\tset\times [0,T^*)$. By the Euler formula $e^{i\cdot x} = \cos x + i\cdot\sin x$ the initial value problem (\ref{eq:initialOneDim}) is equivalent to the complex-valued ($f = f_1 + i\cdot f_2$) initial value problem
\begin{equation}\label{eq:initialOneDimComplex}
\begin{split}
\partial_t f(x,t) &= \nu\cdot\partial_x^2 f(x,t) + e^{i\cdot x}\cdot\partial_x f(x,t)\\
f(x,0) &= e^{i\cdot x}
\end{split}
\end{equation}
on $\tset\times [0,T^*)$, $T^* > 0$. \Cref{exm:simpleBreakDown} shows for $\nu = 0$ that (\ref{eq:initialOneDim}) and (\ref{eq:initialOneDimComplex}) have a unique smooth solution which breaks down in finite time at $T^* = 1$.

\begin{exm}\label{exm:simpleBreakDown}
The initial value problem (\ref{eq:initialOneDimComplex}) with $\nu = 0$ has the unique solution
\[f(x,t) = \sum_{k\in\nset} a_k(t)\cdot e^{i\cdot k\cdot x} \qquad\text{with}\qquad a_k(t) = i^{k-1}\cdot t^{k-1},\]
i.e., $f\in C^\infty([0,1),C^\infty(\tset,\cset))$ but it breaks down at $T^*=1$ since $\|f(x,t)\|_\sA = \frac{1}{1-t}$.
\end{exm}
\begin{proof}
With the Ansatz
\begin{equation}\label{eq:ansatz}
f(x,t) = \sum_{k\in\nset} a_{k}(t)\cdot e^{i\cdot k\cdot x}
\end{equation}
for the solution we get
\[\sum_{k\in\nset} \dot{a}_{k}(t)\cdot e^{i\cdot k\cdot x} = \partial_t f(x,t) = e^{i\cdot x}\cdot\partial_x f(x,t) = \sum_{k\in\nset} a_{k}(t)\cdot i\cdot k\cdot e^{i\cdot (k+1)\cdot x}\]
i.e., the time-dependent coefficients $a_{k}(t)$ fulfill for $k=1$
\begin{align*}
\dot{a}_{1}(t) = 0\quad\text{with}\quad a_1(0) = 1 \qquad&\Rightarrow\qquad a_1(t) = 1,
\intertext{and for $k\geq 2$}
\dot{a}_{k}(t)=i\cdot (k-1)\cdot a_{k-1}(t)\quad\text{with}\quad a_{k}(0)=0\qquad&\Rightarrow\qquad a_k(t)=i^{k-1}\cdot t^{k-1}.
\end{align*}
Hence, we have the solution
\[f(x,t) = \sum_{k\in\nset} i^{k-1}\cdot t^{k-1}\cdot e^{i\cdot k\cdot x} \qquad\text{with}\qquad \|\partial_x^l f(x,t)\|_\sA = \sum_{k\in\nset} k^l\cdot t^{k-1}\]
and therefore $f(x,t)\in C^\infty([0,2\pi],\cset)$ for all $t\in [0,1)$ but breaks down at $T^* = 1$. In fact, we have $\|f(x,t)\|_\sA = \sum_{k=0}^\infty t^k = \frac{1}{1-t}$.
\end{proof}

So we see that (\ref{eq:initialOneDimComplex}) with $\nu=0$ breaks down in finite time $T^*=1$. But the next example shows that with $\nu> 0$ the solution of (\ref{eq:initialOneDimComplex}) never breaks down ($T^*=\infty$).

\begin{exm}\label{exm:noBreakDown}
The initial value problem (\ref{eq:initialOneDimComplex}) with $\nu > 0$ has the unique solution
\[f(x,t) = \sum_{k\in\nset} a_k(t)\cdot e^{i\cdot k\cdot x} \qquad\text{with}\qquad |a_k(t)| \leq \frac{2\cdot e^{-\nu\cdot t}}{\nu^{k-1}\cdot (k+1)!},\]
i.e., $f\in C^\infty([,\infty),C^\infty(\tset,\cset))$.
\end{exm}
\begin{proof}
With the Ansatz (\ref{eq:ansatz}) we get
\[\sum_{k\in\nset}\dot{a}_k(t)\cdot e^{i\cdot k\cdot x} = \sum_{k\in\nset}\left[-\nu\cdot k^2\cdot a_k(t)\cdot e^{i\cdot k\cdot x} +  i\cdot k\cdot a_k(t)\cdot e^{i\cdot (k+1)\cdot x}\right]\]
and therefore the time-dependent coefficients $a_k(t)$ fulfill for $k = 1$
\[\dot{a}_1(t) = -\nu\cdot a_1(t)\quad\text{with}\quad a_1(0)=1 \qquad\Rightarrow\qquad a_1(t) = e^{-\nu\cdot t}\]
and for $k\geq 2$ we have
\[\dot{a}_{k}(t) = -\nu\cdot k^2\cdot a_k(t) + i\cdot (k-1)\cdot a_{k-1}(t)\quad\text{with}\quad a_k(0)=0\]
which provides the induction
\begin{equation}\label{eq:akCalc}
a_k(t) = i\cdot (k-1)\cdot\int_0^t a_{k-1}(s)\cdot e^{\nu\cdot k^2\cdot s}~\diff s\cdot e^{-\nu\cdot k^2\cdot t}.
\end{equation}
Hence, we have $a_1(t) \geq 0$ and (\ref{eq:akCalc}) also implies $i^{-k+1}\cdot a_k(t) \geq 0$ for all $k\in\nset$. For $k\in\nset$ define
\[a_k^*(t) := \frac{2\cdot i^{k-1}}{\nu^{k-1}\cdot (k+1)!}\cdot e^{-\nu\cdot t}.\]
Then
\[a_1^*(t) = \frac{2\cdot i^{1-1}}{\nu^{1-1}\cdot (1+1)!}\cdot e^{-\nu\cdot t} = a_1(t),\]
i.e.,
\begin{equation}\label{eq:akBounds}
i^{-k+1}\cdot a_k^*(t) \geq i^{-k+1}\cdot a_k(t) \geq 0
\end{equation}
holds for $k=1$ and all $t\geq 0$. Assume (\ref{eq:akBounds}) holds for a $k\in\nset$ and all $t\geq 0$. Then
\begin{align*}
i^{-k}\cdot a_{k+1}(t) &= i^{-k}\cdot i\cdot k\cdot \int_0^t a_k(s)\cdot e^{\nu\cdot (k+1)^2\cdot s}~\diff s\cdot e^{-\nu\cdot (k+1)^2\cdot t}\\
&= k\cdot \int_0^t i^{-k+1}\cdot a_k(s)\cdot e^{\nu\cdot (k+1)^2\cdot s}~\diff s\cdot e^{-\nu\cdot (k+1)^2\cdot t}\\
&\leq k\cdot \int_0^t i^{-k+1}\cdot a_k^*(s)\cdot e^{\nu\cdot (k+1)^2\cdot s}~\diff s\cdot e^{-\nu\cdot (k+1)^2\cdot t}\\
&= k\cdot \int_0^t i^{-k+1}\cdot \frac{2\cdot i^{k-1}}{\nu^{k-1}\cdot (k+1)!}\cdot e^{-\nu\cdot s}\cdot e^{\nu\cdot (k+1)^2\cdot s}~\diff s\cdot e^{-\nu\cdot (k+1)^2\cdot t}\\
&= \frac{2\cdot k}{\nu^{k-1}\cdot (k+1)!}\cdot \int_0^t e^{\nu\cdot [(k+1)^2 - 1]\cdot s}~\diff s\cdot e^{-\nu\cdot (k+1)^2\cdot t}\\
&= \frac{2\cdot k}{(k+1)!\cdot \nu^k\cdot [(k+1)^2 - 1]}\cdot \left[ e^{\nu\cdot [(k+1)^2 - 1]\cdot s} \right]_{s=0}^t\cdot e^{-\nu\cdot (k+1)^2\cdot t}\\
&= \frac{2}{(k+2)!\cdot \nu^k}\cdot \left[ e^{\nu\cdot [(k+1)^2 - 1]\cdot t} - 1 \right] \cdot e^{-\nu\cdot (k+1)^2\cdot t}\\
&= \frac{2}{(k+2)!\cdot \nu^k}\cdot \left[ e^{-\nu\cdot t} - e^{-\nu\cdot (k+1)^2\cdot t} \right]\\
&\leq \frac{2}{(k+2)!\cdot \nu^k}\cdot e^{-\nu\cdot t}\\
&= i^{-k}\cdot a_{k+1}^*(t),
\end{align*}
i.e., (\ref{eq:akBounds}) holds also for $k+1$. Therefore, (\ref{eq:akBounds}) holds for all $k\in\nset$ and all $t\geq 0$. Hence,
\[|a_k(t)| \leq |a_k^*(t)| = \frac{2\cdot e^{-\nu\cdot t}}{\nu^{k-1}\cdot (k+1)!}\]
and
\[\|\partial_x^l f(\,\cdot\,,t)\|_\sA = \sum_{k\in\nset} |k^l\cdot a_k(t)| \leq \sum_{k\in\nset} |k^l\cdot a_k^*(t)| = 2\cdot e^{-\nu\cdot t}\cdot \sum_{k\in\nset} \frac{k^l\cdot (\nu^{-1})^{k-1}}{(k+1)!} < \infty\]
hold for all $k\in\nset$ and $t\geq 0$. In summary, for every $l\in\nset_0$ there exists a $c_l > 0$ dependent on $\nu>0$ but independent on $t$ such that
\[\| \partial_x^l f(\,\cdot\,,t)\|_\infty \leq \|\partial_x^l f(\,\cdot\,,t)\|_\sA \leq c_l\cdot e^{-\nu\cdot t}\]
and $f(\,\cdot\,,t)\in C^\infty ([0,2\pi],\cset)$ for all $t\geq 0$. For $l=0$ we have the explicit bound
\[\|f(\,\cdot\,,t)\|_\infty \leq \|f(\,\cdot\,,t)\|_\sA \leq 2\cdot e^{-\nu\cdot t}\cdot [\nu^2\cdot \exp(\nu^{-1}) - \nu^2 - \nu]\]
for all $t\geq 0$. All $\|\partial_x^l f(\,\cdot\,,t)\|_\infty$ decay exponentially in time with rate $-\nu$.
\end{proof}

That $\nu\Delta$ for any $\nu>0$ removes the finite break down is also known for Burgers' equation. Burgers' equation with viscosity is by the Cole--Hopf transformation \cite{hopf50,cole51} equivalent to the heat equation, see e.g.\ \cite[p.\ 207]{evans10} or \cite[Sec.\ 1.1.5]{handbookNonLinPDEs}. Therefore Burgers' equation with viscosity has a solution for all $t\in [0,\infty)$. For a proof on $\tset$ without a Cole--Hopf transformation see e.g.\ \cite[Ch.\ 4]{kreiss89}.

In the third example we will now see the connection of the time dependent Fourier coefficients $a_k(t)$ to the Bessel functions.

In \Cref{thm:main} the coefficients
\[\pset_k a_l(t_i)\]
appeared and since all $\pset_k:\cset^n\to\cset^n$ are projections we used the bounds
\[|\pset_k a_l(t_i)| \leq |a_l(t_i)|,\]
of course in the $l^2$-norm $|\,\cdot\,|$ on $\cset^n$. But if we have $a_l(t_i)\perp k$ we get the apparently worst case
\[\pset_k a_l(t_i) = a_l(t_i).\]
So we will look in \Cref{exm:besselConnection} at this special case which can only appear for $n\geq 3$. We find a solution for
\[v(x,t)=b_l(t)\cdot e^{i\cdot l\cdot x}+b_{-l}(t)\cdot e^{-i\cdot l\cdot x}\in C^\infty([0,\infty),C^\infty(\tset^n))^n.\]
Since $v(x,t)$ shall be real we always have $\overline{b_l(t)} = b_{-l}(t)$. \Cref{exm:besselConnection} reveals the connection to the well-studied Bessel functions of the first kind of order $j\in\nset_0$ \cite{watsonBesselFunctions}:
\[J_j(t) := \sum_{a\in\nset_0} \frac{(-1)^{a}\cdot t^{2a+j}}{a!\cdot (a+j)!\cdot 2^{2a+j}}.\]

\begin{exm}\label{exm:besselConnection}
Let $n\in\nset$, $n\geq 3$, $k,l\in\zset^n$ with $l\neq 0$, $a_k(0)\in\cset^n$ with $a_k(0)\perp l$, and $b_l,b_{-l}\in C([0,\infty),\cset)^n$ with $b_l(t)\cdot l = b_{-l}(t)\cdot l = 0$ and $\overline{b_l(t)} = b_{-l}(t)$ for all $t\in [0,\infty)$. Then the initial value problem
\begin{align*}
\partial_t u(x,t) &= \pset\big[(b_l(t)\cdot e^{i\cdot l\cdot x} + b_{-l}(t)\cdot e^{-i\cdot l\cdot x})\nabla u(x,t)\big]\\
u(x,0) &= a_k(0)\cdot e^{i k\cdot x}
\intertext{has with $B_+(t) := i\cdot \int_0^t \langle b_l(s),k\rangle~\diff s \neq 0$ the unique $C^\infty$-solution}
u(x,t) &= \sum_{j\in\zset} a_k(0)\cdot \left( \frac{B_+(t)}{|B_+(t)|} \right)^j\cdot J_{|j|}(2\cdot |B_+(t)|) \cdot e^{i(k+jl)\cdot x}
\end{align*}
where $J_j$ is the Bessel function of the first kind of order $j$. For $B_+(t) = 0$ the unique solution is $u(x,t) = a_k(0)\cdot e^{i\cdot k\cdot x}$.
\end{exm}
\begin{proof}
Set $\beta_+(t) := i\cdot \langle b_{l}(t),k\rangle$ and $\beta_-(t) := i\cdot \langle b_{-l}(t),k\rangle$. Since $a_k(0)\perp l$ we have
\[\pset_{k+jl} a_k(0) = a_k(0)\]
for all $j\in\zset$. The initial value problem is equivalent to 
\[\dot{a}_{k+jl}(t) = \beta_+(t)\cdot a_{k+(j-1)l}(t) + \beta_-(t)\cdot a_{k+(j+1)l}(t). \tag{$*$}\]
Therefore, $a_{k'}(t) = 0$ for all $k'\neq k+jl$ for all $j\in\zset$. Defining the shifts $S_{+}$ and $S_{-}$ by
\[S_{+}(a_{k'})_{k'\in\zset^n} := (a_{k'+l})_{k'\in\zset} \quad\text{and}\quad S_{-}(a_{k'})_{k'\in\zset^n} := (a_{k'-l})_{k'\in\zset},\]
i.e.,
\[S_{+} S_{-} = S_{-} S_{+} = \id,\tag{$**$}\]
we can write ($*$) as
\[(\dot{a}_{k+jl}(t))_{j\in\zset} = (\beta_+(t)\cdot S_{+} + \beta_-(t)\cdot S_{-})\cdot (a_{k+jl}(t))_{j\in\zset}\]
which has the (unique) solution
\[(a_{k+jl}(t))_{j\in\zset} = \exp\left( \int_0^t \beta_+(s)~\diff s\cdot S_{+} + \int_0^t \beta_-(s)~\diff s\cdot S_{-}\right)\cdot (a_{k+jl}(0))_{j\in\zset}\]
with of course $a_{k+jl}(0) = a_k(0)$ for $j=0$ and $a_{k+jl}(0)=0$ for all $j\neq 0$. Check by differentiation and use that $S_+$ and $S_-$ commute by ($**$).

If $\int_0^t \beta_+(s)~\diff s = 0$, then $u(x,t) = a_k(0)\cdot e^{i\cdot k\cdot x}$. So assume $\int_0^t \beta
_+(s)~\diff s\neq 0$.

Let $e_{k+jl}$ be the vector with the unit matrix $\id\in\cset^{n\times n}$ at position $k+jl$ and the zero matrix $0\in\cset^{n\times n}$ everywhere else. Then
\[a_{k+jl}(t) = \langle e_{k+jl},\exp\left( \int_0^t \beta_+(s)~\diff s\cdot S_{+} + \int_0^t \beta_-(s)~\diff s\cdot S_{-}\right) e_k\rangle\cdot a_k(0)\]
for all $j\in\zset$. Hence, it is sufficient to calculate
\[\langle e_{k+jl},\exp\left( \int_0^t \beta_+(s)~\diff s\cdot S_{+} + \int_0^t \beta_-(s)~\diff s\cdot S_{-}\right) e_k\rangle \in\cset^{n\times n}.\]
Let $j = 2\alpha\in\nset_0$ be non-negative and even. For simplicity we use
\[B_+(t) := \int_0^t \beta_+(s)~\diff s = i\cdot \int_0^t \langle b_l(s),k\rangle~\diff s\]
and
\[B_-(t) := \int_0^t \beta_-(s)~\diff s = i\cdot \int_0^t \langle b_{-l}(s),k\rangle~\diff s.\]
Then since $\overline{b_l(t)} = b_{-l}(t)$ we have
\[\overline{B_+(t)} = - B_-(t) \qquad\text{and}\qquad B_+(t)\cdot B_-(t) = - |B_+(t)|^2 = - |B_-(t)|^2.\]

Since $S_+$ and $S_-$ commute by ($**$) we have
\begin{align*}
&\quad\ \langle e_{k+2\alpha l},\exp(B_+(t)\cdot S_{+} + B_-(t)\cdot S_{-}) e_k\rangle\\
&= \langle S_{+}^{2\alpha} e_k, \sum_{a=0}^\infty \frac{1}{a!} (B_+(t)\cdot S_{+} + B_-(t)\cdot S_{-})^a e_k\rangle\\
&= \langle e_k, S_{-}^{2\alpha} \sum_{a=0}^\infty \frac{1}{a!} \sum_{b=0}^a \begin{pmatrix} a\\ b\end{pmatrix} B_+(t)^b\cdot B_-(t)^{a-b}\cdot S_{+}^b\cdot S_{-}^{a-b} e_k\rangle\\
&= \langle e_k, \sum_{a=0}^\infty \frac{1}{a!}\sum_{b=0}^a \begin{pmatrix}a\\ b\end{pmatrix}\cdot B_+(t)^b\cdot B_-(t)^{a-b}\cdot S_{-}^{a-2b+2\alpha} e_k\rangle
\intertext{since $a-2b+2\alpha$ must be zero by the orthogonality of the $e_k$'s, $a$ must be even and so we sum over $2a$}
&= \langle e_k, \sum_{a=0}^\infty \frac{1}{(2a)!} \sum_{b=0}^{2a} \begin{pmatrix}2a\\ b\end{pmatrix}\cdot B_+(t)^b\cdot B_-(t)^{2a-b}\cdot S_{-}^{2a-2b+2\alpha} e_k\rangle
\intertext{here again the orthogonality of the $e_k$'s implies $2a-2b+2\alpha=0$, i.e., $b = a+\alpha$,}
&= \sum_{a=0}^\infty \frac{1}{(2a)!}\cdot\begin{pmatrix} 2a\\ a+\alpha\end{pmatrix}\cdot B_+(t)^{a+\alpha}\cdot B_-(t)^{a-\alpha}
\intertext{so the summation over $a$ begins at $a=\alpha$, since otherwise $\left(\begin{smallmatrix} 2a\\ a+\alpha\end{smallmatrix}\right)$ is zero,}
&= \sum_{a=\alpha}^\infty \frac{1}{(2a)!}\cdot\begin{pmatrix} 2a\\ a+\alpha\end{pmatrix}\cdot B_+(t)^{a+\alpha}\cdot B_-(t)^{a-\alpha}\\
&= \sum_{a=\alpha}^\infty \frac{1}{(a-\alpha)!\cdot (a+\alpha)!}\cdot B_+(t)^{a+\alpha}\cdot B_-(t)^{a-\alpha}
\intertext{and shifting the summation back to start at $a=0$}
&= \sum_{a=0}^\infty \frac{1}{a!\cdot (a+2\alpha)!}\cdot B_+(t)^{a+2\alpha}\cdot B_-(t)^{a}\\
&= B_+(t)^{2\alpha}\cdot \sum_{a=0}^\infty \frac{(-1)^a\cdot |B_+(t)|^{2a}}{a!\cdot (a+2\alpha)!}\\
&= \left(\frac{B_+(t)}{|B_+(t)|}\right)^{2\alpha}\cdot \sum_{a=0}^\infty \frac{(-1)^a\cdot (2\cdot |B_+(t)|)^{2a+2\alpha}}{a!\cdot (a+2\alpha)!\cdot 2^{2a+2\alpha}}\\
&= \left(\frac{B_+(t)}{|B_+(t)|}\right)^{2\alpha}\cdot J_{2\alpha}(2\cdot |B_+(t)|).
\end{align*}
In the same way we calculate the cases $k+(2\alpha+1)l$, $k-2\alpha l$, and $k-(2\alpha+1)l$ for all $\alpha\in\nset_0$, see \Cref{app:bessel}, and we therefore get
\[a_{k+jl}(t) = a_k(0)\cdot \left(\frac{B_+(t)}{|B_+(t)|}\right)^j\cdot J_{|j|}(2\cdot |B_+(t)|)\]
for all $j\in\zset$.
\end{proof}

$J_0(x)^2 + 2\sum_{j=1}^\infty J_j(x)^2 = 1$ \cite[§2.5 (3)]{watsonBesselFunctions} gives $\|u(x,t)\|_{L^2(\tset^n)} = \|u_0\|_{L^2(\tset^n)}$. $\frac{1}{2}z^2 = \sum_{j\in\nset} \varepsilon_j j^2 J_j(z)^2$ with $\varepsilon_j = \big\{\!\begin{smallmatrix} 1 & \text{for}\ j=1,\\ 2 & \text{for}\ j\geq 2\phantom{,}\end{smallmatrix}$ \cite[§2.72 (3)]{watsonBesselFunctions} for $k_1=0$ implies
\[\|\partial_1 u(x,t)\|_{L^2(\tset^n)} = (2\pi)^n |a_k(0)\cdot l_1|\cdot \sqrt{\frac{z^2}{2} + J_1(z)^2} \leq (2\pi)^n \sqrt{3}\cdot |a_k(0)\cdot l_1\cdot B_+(t)|\]
with $z = 2\cdot |B_+(t)|$ and for $k_1\in\zset$ gives
\[\|\partial_1 u(x,t)\|_{L^2(\tset^n)} \leq \|\partial_1 u_0\|_{L^2(\tset^n)} + (2\pi)^n \sqrt{3}\cdot |a_k(0)\cdot l_1\cdot B_+(t)|.\]
We therefore get the lower bound
\[\|\partial_1 u(x,t)\|_{L^2(\tset^n)} \geq \sqrt{2}\cdot (2\pi)^n\cdot |a_k(0)\cdot l_1|\cdot \left|\int_0^t \langle b_l(s),k\rangle~\diff s\right|\]
on the $L^2$-norm of the first derivative.


\providecommand{\bysame}{\leavevmode\hbox to3em{\hrulefill}\thinspace}
\providecommand{\MR}{\relax\ifhmode\unskip\space\fi MR }
\providecommand{\MRhref}[2]{%
  \href{http://www.ams.org/mathscinet-getitem?mr=#1}{#2}
}
\providecommand{\href}[2]{#2}

\appendix 
\section{Bessel function calculations of \Cref{exm:besselConnection}}\label{app:bessel}

\begin{proof}[Inserted Proof]
We retain the notations in the proof of \Cref{exm:besselConnection}. We calculate
\[\langle e_{k+jl},\exp\left( B_+(t)\cdot S_{+} + B_-(t)\cdot S_-\right) e_k\rangle \in\cset^{n\times n} \tag{\#}\]
for the remaining cases $j = 2\alpha + 1$, $-2\alpha$, and $-2\alpha-1$ with $\alpha\in\nset_0$. We assume $B_+(t)\neq 0$ for all three cases, otherwise (\#) is zero for each case.

Let $j = 2\alpha + 1$ with $\alpha\in\nset_0$. Then
\begin{align*}
&\phantom{=}\;\;\langle e_{k+(2\alpha+1)l},\exp\left( B_+(t)\cdot S_{+} + B_-(t)\cdot S_-\right) e_k\rangle\\
&= \langle S_+^{2\alpha+1} e_k, \sum_{a=0}^\infty \frac{1}{a!} (B_+(t)\cdot S_+ + B_-(t)\cdot S_-)^a e_k\rangle\\
&= \langle e_k, S_-^{2\alpha+1}\cdot \sum_{a=0}^\infty \frac{1}{a!}\cdot\sum_{b=0} \begin{pmatrix} a\\ b\end{pmatrix}\cdot B_+(t)^b\cdot B_-(t)^{a-b}\cdot S_+^b \cdot S_-^{a-b} e_k\rangle\\
&= \langle e_k, \sum_{a=0}^\infty \frac{1}{a!}\cdot \sum_{b=0}^a \begin{pmatrix} a\\ b\end{pmatrix}\cdot B_+(t)^b\cdot B_-(t)^{a-b}\cdot S_-^{a-2b+2\alpha+1} e_k\rangle
\intertext{here $a-2b+2\alpha+1$ must be zero, i.e., we sum only over the odd $a$'s,}
&= \langle e_k \sum_{a=0}^\infty \frac{1}{(2a+1)!}\cdot \sum_{b=0}^{2a+1} \begin{pmatrix} 2a+1\\b\end{pmatrix}\cdot B_+(t)^b\cdot B_-(t)^{2a+1-b}\cdot S_-^{2a-2b+2\alpha+2} e_k\rangle
\intertext{hence, $2a-2b+2\alpha+2$ must be zero, i.e., $b = a+\alpha+1$,}
&= \sum_{a=0}^\infty \frac{1}{(2a+1)!}\cdot \begin{pmatrix}2a+1\\ a+\alpha+1\end{pmatrix}\cdot B_+(t)^{a+\alpha+1}\cdot B_-(t)^{a-\alpha}
\intertext{the summation can start at $a=\alpha$, otherwise $\left(\begin{smallmatrix} 2a+1\\ a+\alpha+1\end{smallmatrix}\right)$ is zero,}
&= \sum_{a=\alpha}^\infty \frac{1}{(2a+1)!}\cdot \begin{pmatrix}2a+1\\ a+\alpha+1\end{pmatrix}\cdot B_+(t)^{a+\alpha+1}\cdot B_-(t)^{a-\alpha}\\
\intertext{shifting $a$ back to start at $a=0$ gives}
&= \sum_{a=0}^\infty \frac{1}{(2a+2\alpha+1)!}\cdot \begin{pmatrix}2a+2\alpha+1\\ a+2\alpha+1\end{pmatrix}\cdot B_+(t)^{a+2\alpha+1}\cdot B_-(t)^{a} \\
&= \sum_{a=0}^\infty \frac{B_+(t)^{a+2\alpha+1}\cdot B_-(t)^{a}}{(a+2\alpha+1)!\cdot a!}\\
&= B_+(t)^{2\alpha+1}\cdot\sum_{a=0}^\infty \frac{(-1)^a \cdot |B_+(t)|^{2a}}{(a+2\alpha+1)!\cdot a!}\\
&= \left(\frac{B_+(t)}{|B_+(t)|}\right)^{2\alpha+1}\cdot\sum_{a=0}^\infty \frac{(-1)^a\cdot (2\cdot|B_+(t)|)^{2a+2\alpha+1}}{(a+2\alpha+1)!\cdot a!\cdot 2^{2a+2\alpha+1}}\\
&= \left(\frac{B_+(t)}{|B_+(t)|}\right)^{2\alpha+1}\cdot J_{2\alpha+1}(2\cdot |B_+(t)|).
\end{align*}
Hence the statement is also true for $j=2\alpha+1$ with $\alpha\in\nset_0$.

Now, let $j=-2\alpha$ with $\alpha\in\nset$. Then
\begin{align*}
&\phantom{=}\;\;\langle e_{k-2\alpha l},\exp\left( B_+(t)\cdot S_{+} + B_-(t)\cdot S_-\right) e_k\rangle\\
&= \langle S_-^{2\alpha} e_k, \sum_{a=0}^\infty \frac{1}{a!} (B_+(t)\cdot S_+ + B_-(t)\cdot S_-)^a e_k\rangle\\
&= \langle e_k, S_+^{2\alpha}\cdot \sum_{a=0}^\infty \frac{1}{a!}\cdot \sum_{b=0}^a \begin{pmatrix} a\\ b\end{pmatrix}\cdot B_+(t)^{a-b}\cdot B_-(t)^b\cdot S_+^{a-b}\cdot S_-^b e_k\rangle\\
&= \langle e_k, \sum_{a=0}^\infty \frac{1}{a!}\cdot \sum_{b=0}^a \begin{pmatrix} a\\ b\end{pmatrix}\cdot B_+(t)^{a-b}\cdot B_-(t)^{b}\cdot S_+^{a-2b+2\alpha} e_k\rangle
\intertext{hence $a-2b+2\alpha$ must be zero, i.e., the summation is over all even $a$'s,}
&= \langle e_k, \sum_{a=0}^\infty \frac{1}{(2a)!}\cdot \sum_{b=0}^{2a} \begin{pmatrix} 2a\\ b\end{pmatrix}\cdot B_+(t)^{2a-b}\cdot B_-(t)^b\cdot S_+^{2a-2b+2\alpha} e_k\rangle
\intertext{now $2a-2b+2\alpha$ must be zero, i.e., $b = a+\alpha$,}
&= \sum_{a=0}^\infty \frac{1}{(2a)!} \cdot\begin{pmatrix}2a\\ a+\alpha\end{pmatrix}\cdot B_+(t)^{a-\alpha}\cdot B_-(t)^{a+\alpha}
\intertext{summation can start at $a=\alpha$, since otherwise $\left(\begin{smallmatrix} 2a\\ a+\alpha\end{smallmatrix}\right)$ is zero,}
&= \sum_{a=\alpha}^\infty \frac{1}{(2a)!} \cdot\begin{pmatrix}2a\\ a+\alpha\end{pmatrix}\cdot B_+(t)^{a-\alpha}\cdot B_-(t)^{a+\alpha}\\
&= \sum_{a=\alpha}^\infty \frac{B_+(t)^{a-\alpha}\cdot B_-(t)^{a+\alpha}}{(a-\alpha)!\cdot (a+\alpha)!}
\intertext{shift $a$ to start at $a=0$ again}
&= \sum_{a=0}^\infty \frac{B_+(t)^{a}\cdot B_-(t)^{a+2\alpha}}{a!\cdot (a+2\alpha)!}\\
&= \frac{(-B_-(t))^\alpha}{B_+(t)^{\alpha}}\cdot \sum_{a=0}^\infty \frac{(-1)^{a}\cdot |2\cdot B_+(t)|^{2a+2\alpha}}{a!\cdot (a+2\alpha)!\cdot 2^{2a+2\alpha}}\\
&= \left(\frac{B_+(t)}{|B_+(t)|}\right)^{-2\alpha}\cdot J_{|-2\alpha|}(2\cdot |B_+(t)|).
\end{align*}
Hence, the statement is true for $j=-2\alpha$ with $\alpha\in\nset$.

Finally, let $j = -2\alpha-1$ with $\alpha\in\nset_0$. Then
\begin{align*}
&\phantom{=}\;\;\langle e_{k-(2\alpha+1)l},\exp\left( B_+(t)\cdot S_{+} + B_-(t)\cdot S_-\right) e_k\rangle\\
&= \langle S_-^{2\alpha+1} e_k, \sum_{a=0}^\infty \frac{1}{a!} (B_+(t)\cdot S_+ + B_-(t)\cdot S_-)^a e_k\rangle\\
&= \langle e_k, S_+^{2\alpha+1}\cdot \sum_{a=0}^\infty \frac{1}{a!}\cdot \sum_{b=0}^a \begin{pmatrix} a\\ b\end{pmatrix}\cdot B_+(t)^{a-b}\cdot B_-(t)^b\cdot S_+^{a-b}\cdot S_-^b e_k\rangle\\
&= \langle e_k, \sum_{a=0}^\infty \frac{1}{a!}\cdot \sum_{b=0}^a \begin{pmatrix} a\\ b\end{pmatrix}\cdot B_+(t)^{a-b}\cdot B_-(t)^a\cdot S_+^{a-2b+2\alpha+1} e_k\rangle
\intertext{hence $a-2b+2\alpha+1$ must be zero, i.e., summation can be done over all odd $a$'s}
&= \langle e_k, \sum_{a=0}^\infty \frac{1}{(2a+1)!}\cdot \sum_{b=0}^{2a+1} \begin{pmatrix} 2a+1\\ b\end{pmatrix} B_+(t)^{2a+1-b}\cdot B_-(t)^b\cdot S_+^{2a-2b+2\alpha+2} e_k\rangle
\intertext{and therefore $2a-2b+2\alpha+2$ must be zero, i.e., $b = a+\alpha+1$,}
&= \sum_{a=0}^\infty \frac{1}{(2a+1)!}\begin{pmatrix} 2a+1\\ a+\alpha+1\end{pmatrix}\cdot B_+(t)^{a-\alpha}\cdot B_-(t)^{a+\alpha+1}
\intertext{and the summation can start at $a=\alpha$, since otherwise $\left(\begin{smallmatrix}2a+1\\ a+\alpha+1\end{smallmatrix}\right)$ is zero,}
&= \sum_{a=\alpha}^\infty \frac{1}{(2a+1)!}\begin{pmatrix} 2a+1\\ a+\alpha+1\end{pmatrix}\cdot B_+(t)^{a-\alpha}\cdot B_-(t)^{a+\alpha+1}\\
&= \sum_{a=\alpha}^\infty \frac{B_+(t)^{a-\alpha}\cdot B_-(t)^{a+\alpha+1}}{(a-\alpha)!\cdot (a+\alpha+1)!}
\intertext{shifting $a$ to start summation at $a=0$ again we get}
&= \sum_{a=0}^\infty \frac{B_+(t)^{a}\cdot B_-(t)^{a+2\alpha+1}}{a!\cdot (a+2\alpha+1)!}\\
&= \left(\frac{B_+(t)}{|B_+(t)|}\right)^{-2\alpha-1}\cdot \sum_{a=0}^{\infty} \frac{(-1)^a\cdot |2\cdot B_+(t)|^{2a+2a+1}}{a!\cdot (a+2\alpha+1)!\cdot 2^{2a+2\alpha+1}}\\
&= \left(\frac{B_+(t)}{|B_+(t)|}\right)^{-2\alpha-1}\cdot J_{|-2\alpha-1|}(2\cdot |B_+(t)|).
\end{align*}
So finally also the last case $j=-2\alpha-1$ with $\alpha\in\nset_0$ is proved.
\end{proof}

\end{document}